\theoremstyle{definition}
\newtheorem*{definition}{Definition}
\theoremstyle{plain}
\newtheorem{theorem}{Theorem}
\newtheorem{proposition}[theorem]{Proposition}
\newtheorem{lemma}[theorem]{Lemma}
\newtheorem*{corollary}{Corollary}
\newtheorem*{theorem*}{Theorem}
\newtheorem{mainlemma}{Lemma}
\newtheorem{maintheorem}[mainlemma]{Theorem}
\def\op{\operatorname}
\def\E{\mathscr{E}}
\def\F{\mathscr{F}}
\def\G{\mathscr{G}}
\def\I{\mathscr{I}}
\def\K{\mathscr{K}}
\def\C{\mathfrak{C}}
\def\D{\mathfrak{D}}
\def\L{\mathscr{L}}
\def\O{\mathscr{O}}
\def\P{\mathbf{P}}
\def\Spec{\op{Spec}}
\def\Fitt{\mathscr{F}itt}
\def\Ext{\op{Ext}}
\def\sExt{\mathscr{E}\kern -.5pt xt}
\newcommand*{\sHom}{\mathscr{H}\kern -.5pt om}
\def\coker{\op{coker}}
\def\im{\op{im}}
\def\rank{\op{rank}}
\def\grade{\op{grade}}
\def\depth{\op{depth}}
\begin{document}

\title{Graded basic elements}
\author{Mengyuan Zhang}
\address{Department of Mathematics, University of California, 
Berkeley, CA 94720}
\email{myzhang@berkeley.edu}

\begin{abstract}
We prove the existence theorem for basic elements in the quasi-projective case, extending results of Eisenbud-Evans and Bruns from the affine case.
We give several geometric applications.
For example, we show that every local complete intersection of pure codimension two in a smooth projective variety of dimension $d$ over an infinite field is the degeneracy locus of $d-1$ sections of a rank $d$ bundle.
\end{abstract}

\maketitle 

\section*{Introduction}
The notion of basic elements originated in the work of Forster \cite{Forster} and Swan \cite{Swan}.
A basic element of a module is an element that is part of a minimal system of generators of the module at the localization by every prime.
The theory surrounding basic elements has always been very technical, but its applications have shown themselves to be very useful.
For example, the Forster-Swan theorem provides an upper bound on the number of elements needed to generate a finite module over a ring.
Furthermore, basic elements are closely related to unimodular elements, which were central to Bass' work \cite{Bass} on cancellation and stable range theorems in $K$-theory.
In \cite{EE}, Eisenbud and Evans generalized and simplified the surrounding ideas, and proved an existence theorem of basic elements in submodules that contain ``enough" minimal generators at each point.
Bruns \cite{Bruns} proved an analogous existence theorem of elements that are basic at primes up to a given depth.

\bigskip

The first goal of this article is to extend the above existence theorems to the graded setting, more precisely, to coherent sheaves on a quasi-projective scheme over a noetherian ring containing an infinite field.
Instead of stating the technical results here, we produce below a consequence of the main theorems, which generalizes a lemma of Serre (see \cite[p148]{Mumford}) on the existence of a non-vanishing section of a globally generated bundle whose rank is greater than the dimension of the ambient variety.

\setcounter{theorem}{-1}
\begin{theorem}\label{Intro}
Let $X$ be a scheme of dimension $d$ that is quasi-projective over a noetherian ring containing an infinite field.
Suppose $\F$ is a coherent sheaf on $X$ generated by global sections and $\mu(\F_p) > d$ for all $p\in X$.
Then there is a section $s$ of $\F$ such that if $\F'$ is the subsheaf generated by $s$ then $\mu((\F/\F')_p) = \mu(\F_p)-1$ for all $p\in X$. 
Here $\mu(\F_p)$ denotes the minimal number of generators of $\F_p$ over the local ring $\O_p$.
\end{theorem}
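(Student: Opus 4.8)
The plan is to run the general‑position argument behind Serre's lemma, of which this is the natural generalization; the statement also falls out of the general existence theorem for (graded) basic elements, but the geometric argument is the prototype and I would present it first. The section $s$ I am after should be \emph{basic} at every point, i.e.\ its image $s(p)$ in $\F_p\otimes\kappa(p)$ should be nonzero for all $p\in X$: indeed, writing $\F'=\im(\O_X\xrightarrow{\,s\,}\F)$, right‑exactness and Nakayama give $(\F/\F')_p\otimes\kappa(p)=(\F_p\otimes\kappa(p))/\kappa(p)\,s(p)$, which has dimension $\mu(\F_p)-1$ exactly when $s(p)\neq 0$, and $\mu((\F/\F')_p)$ equals this dimension. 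So the problem is to find one global section nonvanishing, in this sense, at \emph{every} point at once.

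For this I would first replace $H^0(X,\F)$ by a finite‑dimensional $k$‑subspace $W\subseteq H^0(X,\F)$ (where $k$ is the given infinite field) for which $W\otimes_k\O_X\to\F$ is still surjective --- possible since $X$ is quasi‑compact and $\F$ is coherent and globally generated, so finitely many global sections generate $\F$ and one takes their $k$‑span. A basis $e_1,\dots,e_N$ of $W$ identifies $W$ with $\A^N_k$, with coordinates $w_1,\dots,w_N$. On $\A^N_X=X\times_{\Spec k}\A^N_k$, with projections $\pi\colon\A^N_X\to X$ and $\rho\colon\A^N_X\to\A^N_k$, form the universal section $\sigma=\sum_i w_i\,\pi^*e_i\in H^0(\A^N_X,\pi^*\F)$ and let $\mathcal Z$ be the locus where $\sigma$ vanishes in the fiber of $\pi^*\F$. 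A $k$‑point $(c_1,\dots,c_N)\in\A^N(k)$ lies outside $\overline{\rho(\mathcal Z)}$ exactly when the section $s=\sum_i c_i e_i$ is basic at every $p$, so it suffices to find such a $k$‑point. The heart of the matter is the bound $\dim\mathcal Z\le N-1$: over $p\in X$ the fiber $\pi^{-1}(p)=\A^N_{\kappa(p)}$ meets $\mathcal Z$ in the kernel of the $\kappa(p)$‑linear map $W\otimes_k\kappa(p)\to\F_p\otimes\kappa(p)$, which is onto a space of dimension $\mu(\F_p)$, hence a linear subspace of dimension $N-\mu(\F_p)\le N-d-1$; stratifying $X$ by the finitely many values of the upper‑semicontinuous function $p\mapsto\mu(\F_p)$ (on each constant‑rank stratum the vanishing of $\sigma$ is a closed condition, so $\mathcal Z$ is a genuine locally closed subscheme there), and using $\dim(\text{stratum})\le d\le\mu(\F_p)-1$, one gets $\dim\mathcal Z\le(\mu(\F_p)-1)+(N-\mu(\F_p))=N-1$. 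Therefore $\overline{\rho(\mathcal Z)}$ is a proper closed subvariety of $\A^N_k$, and since $k$ is infinite the $k$‑points $\A^N(k)=k^N$ are Zariski‑dense, so some $(c_1,\dots,c_N)\in k^N$ avoids it; the corresponding $s$ does the job by the first paragraph.

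I expect two points to require real care. First, the vanishing locus of a section of a coherent sheaf of non‑constant rank need not be closed, so $\mathcal Z$ is only constructible and one has to argue stratum by stratum as above (or with the Fitting ideals of $\F$ and of $\coker(s)$) to make the dimension count legitimate. Second --- and this is the genuine obstacle --- the dimension count is transparent only when $X$ is of finite type over the field $k$, since for a residue field $\kappa(p)$ of large transcendence degree over $k$ a bound on the Krull dimension of $\mathcal Z$ no longer prevents $\mathcal Z$ from dominating $\A^N_k$. For an arbitrary Noetherian base containing $k$ one must instead run the algebraic argument --- Noetherian induction on $X$ together with a prime‑avoidance lemma over the infinite field $k$ (at finitely many ``bad'' primes, a general $k$‑combination $\sum_i c_i e_i$ is simultaneously basic, because $\mu(\F_p)>d\ge\dim X$ leaves room to avoid the finitely many linear conditions) --- which is robust to the base and is exactly the ``very technical'' theory extending Eisenbud--Evans and Bruns that the paper's main theorems deliver; the present statement is then a direct consequence.
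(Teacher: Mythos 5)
Your incidence-variety argument is a genuinely different route from the paper's and, for $X$ of finite type over the infinite field $k$ itself, it is correct and clean: the locus $\mathcal{Z}\subset X\times_k\A^N_k$ where the universal combination $\sum_i w_i e_i$ vanishes in the fiber has dimension at most $N-1$ by the stratified fiber-dimension count, so $\overline{\rho(\mathcal{Z})}\subsetneq\A^N_k$ and a general $k$-point avoids it. You also diagnose precisely why this breaks over an arbitrary noetherian $A\supseteq k$: $\mathcal{Z}$ is of finite type over $A$ but not over $k$, and a bound on the Krull dimension of $\mathcal{Z}$ does not control $\dim\overline{\rho(\mathcal{Z})}$ once residue fields $\kappa(p)$ have large transcendence degree over $k$ --- already $\Spec k(t)\to\A^1_k$ is a zero-dimensional scheme dominating the affine line. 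So the geometric argument is a valid and illuminating proof of the special case $A=k$, which the paper's approach does not offer explicitly.

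The gap is in the general case. Writing ``run the algebraic argument \dots which is exactly the technical theory the paper's main theorems deliver; the present statement is then a direct consequence'' is a pointer, not a proof; it is word-for-word how the paper proves the statement (take $s_1,\dots,s_u\in H^0(\F)$ generating $\F$, observe $(s_1,\dots,s_u)$ is $\min(u,\,d+1-\dim\O_p)$-basic in $\F$ at every $p$ because $\mu(\F_p)>d$, then apply the codimension existence theorem with $t=1$). Your two-line sketch of the algebraic side --- Noetherian induction plus prime avoidance over $k$ --- omits the actual content: the reduction to finitely many ``bad'' points via Fitting-ideal strata, and the Finite Shrinking Lemma with its linear-algebra trick and unipotent changes of coordinates, which is where the infinite field is truly used. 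As a self-contained proof of the full statement the proposal is therefore incomplete: it establishes the case $A=k$ by an attractive alternative method and then simply cites the paper's own machinery for the rest.
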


The second goal of this article is to provide several geometric applications of the theory of graded basic elements.
We define the Cayley-Bacharach index of a set of points $Z$ in $\P^2_k$, which is a geometric invariant, and bound it above and below by the highest and lowest second Betti numbers of the homogeneous ideal of $Z$, which are algebraic invariants. 
We also prove that every local complete intersection of pure codimension two in a smooth projective variety of dimension $d$ is the degeneracy locus of $d-1$ sections of a rank $d$ bundle.

\setcounter{section}{-1}
\section{Setup}
Throughout, let $A$ denote a noetherian ring containing an infinite field $k$, and let $X$ be a quasi-projective scheme over $A$ with a very ample line bundle $\O(1)$.
All sheaves in consideration are assumed to be coherent on $X$. 

\bigskip

For a sheaf $\F$, we write $\F(l)$ for $\F\otimes \O(1)^{\otimes l}$ and define $H^i_*(\F)$ to be $\bigoplus_{l \in\mathbb{Z}} H^i(X,\F(l))$, which is a graded module over the section ring $H^0_*(\O)$.
We call sections of $\F(l)$ \emph{sections of $\F$ in degree $l$}, or \emph{twisted sections of $\F$}. 
If $s_1,\dots, s_u$ are sections of $\F$ in degrees $a_1,\dots, a_u$, then we use $(s_1,\dots, s_u)$ to denote the image subsheaf of the morphism
\[
\varphi:\bigoplus_{i = u}^l \O(-a_i)\xrightarrow{s_1,\dots,s_u} \F.
\]
For a point $p\in X$, let $\F_p$ denote the stalk of $\F$ at $p$, and let $\F_{(p)}$ denote the fiber of $\F$ at $p$. 
We write $\mu(\F_p)$ for the minimal number of generators of $\F_p$ over the local ring $\O_p$. 
Nakayama's lemma implies that $\mu(\F_p) = \dim_{k(p)}\F_{(p)}$.
For any $p\in X$, there exists a linear form $L\in H^0(\O(1))$ which does not vanish at $p$ since $\O(1)$ is very ample. 
If $\varphi$ is as above, then the image of $\varphi_p$ is generated by $\{(s_1/L^{a_1})_p,\dots, (s_u/L^{a_u})_p\}$ in $\F_p$. 
If we chose a different dehomogenization, then these elements would be scaled by units in $\O_p$.
The image of $\varphi_{(p)}$ is generated by $\{(s_1/L^{a_1})_{(p)},\dots, (s_u/L^{a_u})_{(p)}\}$ in the fiber $\F_{(p)}$ and does not depend on the choice of $L$.
The $k(p)$-dimension of $\im \varphi_{(p)}$ is equal to $\mu(\F_p)-\mu((\coker \varphi)_p)$ in view of the right exact sequence 
\[
\bigoplus_{i = u}^l \O(-a_i)_{(p)} \xrightarrow{\varphi_{(p)}} \F_{(p)} \to (\F/(s_1,\dots, s_u))_{(p)} \to 0.
\]

\section{The Finite Shrinking Lemma}
The central definition of this article is the following.
\begin{definition}
A subsheaf $\F'$ of a sheaf $\F$ is said to be \emph{$w$-basic in $\F$ at $p\in X$} if 
\[
\mu((\F/\F')_p)\le \mu(\F_p)-w.
\]
Twisted sections $s_1,\dots, s_u$ of $\F$ are said to be \emph{basic in $\F$ at $p\in X$} if $(s_1,\dots, s_u)$ is $u$-basic in $\F$ at $p$.
\end{definition}

By Nakayama's lemma, a subsheaf $\F'$ is $w$-basic in $\F$ at $p$ iff $\F'_p$ contains $w$ elements that are a part of a minimal system of generators of $\F_p$.

\bigskip

Our main technical contribution is the following lemma, which says that we may, after a unipotent change of coordinates, drop one section of the lowest degree while maintaining basicness to the maximal possible extent at finitely many points.

\begin{mainlemma}[Finite shrinking lemma]\label[lemma]{FiniteBasic}
Let $\{p_1,\dots, p_v\}$ be a finite set of points in $X$.
Let $s_1,\dots, s_u$ be sections of a sheaf $\F$ in degrees $a_1 \le \dots \le a_u$ where
\[
(s_1,\dots, s_u)\text{ is }w_i\text{-basic in }\F\text{ at }p_i\quad \forall 1\le i\le v.
\]
Then there are $r_j \in H^0(\O(a_j-a_1))$ for each $2\le j\le u$ such that 
\[
(s_2+r_2\cdot s_1,\dots, s_u+r_u\cdot s_1)\text{ is }\min(u-1,w_i)\text{-basic in }\F\text{ at }p_i\quad \forall1\le i \le v.
\]
\end{mainlemma}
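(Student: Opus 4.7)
The plan is to reduce the global problem to a simultaneous linear-algebra problem at the fibers $\F_{(p_i)}$, and to exhibit the desired $r_j$ in the particularly rigid form $r_j = c_j\, L^{a_j-a_1}$ for scalars $c_j \in k$, where $L \in H^0(\O(1))$ is a fixed linear form not vanishing at any $p_i$. Such an $L$ exists because $\O(1)$ is very ample (each $\{L : L(p_i) = 0\}$ is a proper $k$-subspace of $H^0(\O(1))$) and $k$ is infinite, so the $k$-vector space $H^0(\O(1))$ is not the union of finitely many proper $k$-subspaces. With $L$ fixed, set $\bar{s}_j = (s_j/L^{a_j})_{(p_i)} \in \F_{(p_i)}$; the modified section $s_j + c_j L^{a_j-a_1} s_1$ evaluates to $\bar{s}_j + c_j \bar{s}_1$.

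Next I would prove the local claim: at each $p_i$, the bad set $B_i \subset \mathbb{A}^{u-1}_{k(p_i)}$ of tuples $(c_2, \dots, c_u)$ for which $\dim_{k(p_i)} \text{span}(\bar{s}_j + c_j\bar{s}_1 : j \ge 2) < \min(u-1, w_i)$ is a proper Zariski closed subset. The hypothesis supplies $\dim \text{span}(\bar{s}_1, \dots, \bar{s}_u) \ge w_i$, and a short case split on whether $\dim \text{span}(\bar{s}_2, \dots, \bar{s}_u)$ equals $w_i$ or $w_i - 1$ handles the claim: in the first case $(c_j) = 0$ already works; in the second case $\bar{s}_1 \notin \text{span}(\bar{s}_2, \dots, \bar{s}_u)$, and either $w_i = u$ so $\bar{s}_2, \dots, \bar{s}_u$ are linearly independent and any $(c_j)$ works, or there is a nonzero relation $\sum_{j \ge 2}\lambda_j \bar{s}_j = 0$ and the identity $\sum_j \lambda_j(\bar{s}_j + c_j\bar{s}_1) = (\sum_j \lambda_j c_j)\bar{s}_1$ forces $\bar{s}_1$ (and hence all of $\text{span}(\bar{s}_1, \dots, \bar{s}_u)$) into the new span whenever $(c_j)$ avoids the hyperplane $\sum \lambda_j c_j = 0$.

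Then I would descend from $k(p_i)$ to $k$ using the elementary fact that, for a field extension $k \subset K$ with $k$ infinite, a polynomial in $K[x_1, \dots, x_{u-1}]$ vanishing on $k^{u-1}$ is identically zero (expand in a $k$-basis of $K$ and apply the infinite-field vanishing statement over $k$). Applied to the defining equations of $B_i$, this shows $B_i \cap k^{u-1}$ is a proper $k$-Zariski closed subset of the irreducible space $\mathbb{A}^{u-1}_k$. The finite union $\bigcup_i (B_i \cap k^{u-1})$ is therefore still proper, and since $k$ is infinite it admits a $k$-point $(c_2, \dots, c_u)$ in its complement. Setting $r_j = c_j L^{a_j - a_1}$ yields sections for which the image of the evaluation of $\bigoplus_{j \ge 2}\O(-a_j) \to \F$ at $p_i$ has $k(p_i)$-dimension at least $\min(u-1, w_i)$, which by the fiber formula in Section~0 is exactly the assertion of $\min(u-1, w_i)$-basicness at $p_i$.

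The main obstacle is precisely this descent from local linear algebra over $k(p_i)$ to global sections over $k$: the evaluation map $H^0(\O(a_j - a_1)) \to k(p_i)$ is generally far from surjective, so one cannot independently prescribe the images $c_{j,i} \in k(p_i)$ as $i$ varies. The trick of restricting to the $k$-affine family $r_j = c_j L^{a_j - a_1}$ reduces the problem to avoiding finitely many proper closed subsets of a single $k$-vector space $k^{u-1}$, and the polynomial-vanishing lemma packaged with $k$ infinite closes the argument.
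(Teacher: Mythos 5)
Your argument is correct, and it takes a genuinely different route from the paper. The paper proves the lemma by induction on the number of points $v$: it first arranges basicness at $p_1,\dots,p_{v-1}$, then repairs the last point by perturbing a single well-chosen section $s_l$ by $\lambda L^{a_l-a_1}s_1$, using the elementary fact (its Lemma~\ref{LinearAlg}) that for each earlier point at most one scalar $\lambda$ can destroy basicness, so an infinite field supplies a good $\lambda$. You instead run a one-shot general-position argument: restrict to the $k$-affine family $r_j=c_jL^{a_j-a_1}$, show that at each $p_i$ the bad locus in the $(c_2,\dots,c_u)$-space is contained in a proper hypersurface over $k(p_i)$ (via the determinantal description of the rank condition, with the relation $\sum_j\lambda_j\bar s_j=0$ producing an explicit avoiding hyperplane in the only nontrivial case), descend nonvanishing from $k(p_i)$ to $k$ by expanding coefficients in a $k$-basis, and avoid the finite union because $k$ is infinite. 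Both proofs hinge on the same two ingredients --- a single linear form $L$ nonvanishing at all the $p_i$ and the infinitude of $k$ --- but your version replaces the induction on $v$ and the one-parameter perturbation lemma with a simultaneous avoidance of finitely many proper closed subsets of $k^{u-1}$; this is arguably more uniform and yields the $r_j$ in the especially rigid form $c_jL^{a_j-a_1}$ with all $c_j\in k$, whereas the paper's induction stays closer to the original Eisenbud--Evans argument and modifies only one section per step. One small imprecision worth fixing: your dichotomy should be on whether $\dim\op{span}\{\bar s_2,\dots,\bar s_u\}$ is at least $\min(u-1,w_i)$ (in which case $c=0$ already works, and this covers the possibility that the span has dimension exceeding $w_i$), with the residual case being exactly $\dim\op{span}\{\bar s_2,\dots,\bar s_u\}=w_i-1$ and $w_i\le u-1$, where your relation argument applies.
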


\Cref{FiniteBasic} is the graded version of \cite[Lemma 3]{EE}, which is a crucial step in both \cite[Theorem A]{EE} and \cite[Theorem 1]{Bruns}.
We explain below the new difficulty in the graded setting.

\bigskip

If $R$ is a ring and $M$ is an $R$-module, then any element $s_j$ of $M$ can be modified by a multiple of a given element $s_1$ to yield another element $s_j' = s_j + r_js_1$.  
However, in the setting of \Cref{FiniteBasic}, the elements $s_1$ and $s_j$ of the module $H^0_*(\F)$ are homogeneous of degrees $a_1$ and $a_j$, and the form $r_j$ must now be homogeneous of degree $a_j-a_1$. 
Since $H^0_*(\O)$ may not contain homogeneous forms of negative degrees (e.g. when $X$ is integral), we can modify higher degree sections by lower degree ones but not vice versa in general.
The original proof of \cite[Lemma 3]{EE} needed a form $r_j$ that vanishes on $p_1,\dots, p_{v-1}$ but not on $p_v$, assuming $p_v$ is minimal. 
Although such a homogeneous form exists in high enough degrees in $H^0_*(X)$, there is no guarantee for it to exist in degree $a_j-a_1$. 

\bigskip

We use a different strategy centered around the following lemma from linear algebra.

\begin{lemma}\label{LinearAlg}
Let $v_1,\dots,v_u$ be vectors in a vector space $V$ over a field $K$.
For any fixed $j$ where $2\le j\le u$, there is at most one $\lambda \in K$ where
\[
\dim \op{span}\{v_2,\dots, v_j+\lambda\cdot v_1,\dots, v_u\} <\dim \op{span}\{v_2,\dots, v_u\}.
\] 
\end{lemma}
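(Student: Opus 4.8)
The plan is to fix the index $j$ and track how the span changes as $\lambda$ varies. Set $W' := \op{span}\{v_2,\dots,v_u\}$ with the $j$-th vector omitted, and $W := \op{span}\{v_2,\dots,v_u\} = W' + Kv_j$. The key observation is that for every $\lambda \in K$ one has $\op{span}\{v_2,\dots,v_j+\lambda v_1,\dots,v_u\} = W' + K(v_j+\lambda v_1)$, a subspace containing $W'$ and of dimension either $\dim W'$ or $\dim W' + 1$.

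First I would dispose of the case $v_j \in W'$. Then $W = W'$, and $W' + K(v_j + \lambda v_1) \supseteq W'$ has dimension at least $\dim W' = \dim W$ for every $\lambda$, so the displayed strict inequality never holds and the conclusion is vacuous. Hence I may assume $v_j \notin W'$, so that $\dim W = \dim W' + 1$.

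Under this assumption, $\dim\bigl(W' + K(v_j+\lambda v_1)\bigr)$ equals $\dim W' = \dim W - 1$ when $v_j + \lambda v_1 \in W'$, and $\dim W' + 1 = \dim W$ otherwise. Thus the strict inequality in the lemma is equivalent to the membership $v_j + \lambda v_1 \in W'$, and it remains to check that this holds for at most one $\lambda$. If $v_j + \lambda_1 v_1$ and $v_j + \lambda_2 v_1$ both lie in $W'$ with $\lambda_1 \neq \lambda_2$, then subtracting gives $(\lambda_1 - \lambda_2)v_1 \in W'$, hence $v_1 \in W'$, and then $v_j = (v_j + \lambda_1 v_1) - \lambda_1 v_1 \in W'$, contradicting $v_j \notin W'$.

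There is essentially no obstacle here; it is elementary linear algebra. The only point requiring care is keeping the two cases ($v_j \in W'$ versus $v_j \notin W'$) separate, since the reduction of the strict inequality to the membership condition $v_j + \lambda v_1 \in W'$ is only valid in the second case.
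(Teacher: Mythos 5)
Your proof is correct and follows essentially the same approach as the paper: split on whether $v_j$ lies in $W' = \op{span}\{v_2,\dots,\hat{v_j},\dots,v_u\}$, dismiss the first case as vacuous, and in the second case pin down $\lambda$ uniquely. The only cosmetic difference is that the paper passes to the quotient $V/W'$ and observes that $\overline{v_j}+\lambda\overline{v_1}=0$ determines $\lambda$, whereas you stay in $V$ and subtract two putative solutions; the content is identical.
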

\begin{proof}
If $v_j \in \op{span}\{v_2,\dots,\hat{v_j},\dots, v_u\}$, then clearly
\[
\dim \op{span}\{v_2,\dots, v_j+\lambda\cdot v_1,\dots, v_u\} \ge \dim \op{span}\{v_2,\dots, \hat{v_j},\dots v_u\} = \dim \op{span}\{v_2,\dots, v_u\}.
\] 
Suppose $v_j \not\in \op{span}\{v_2,\dots,\hat{v_j},\dots, v_u\}$. 
We may quotient $V$ by $\op{span}\{v_2,\dots, \hat{v_j},\dots, v_u\}$ and denote the images of $v_1$ and $v_j$ by $\overline{v_1}$ and $\overline{v_j}$.
If the inequality in the statement holds, then we must have $\overline{v_j}+\lambda\cdot \overline{v_1} = 0$.
It follows that $\overline{v_1}$ is a nonzero multiple of $\overline{v_j}$ and $\lambda$ is uniquely determined.
\end{proof}

\begin{lemma}\label{Nonvanishing}
Let $p_1,\dots, p_v\in X$ be finitely many points.
There is a linear form $L\in H^0(\O(1))$ that does not vanish on $p_i$ for any $1\le i \le v$.
\end{lemma}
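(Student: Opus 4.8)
The plan is to realize, for each point, the linear forms vanishing there as a \emph{proper} $k$-subspace of $H^0(\O(1))$, and then to use that a vector space over an infinite field is never a finite union of proper subspaces.

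Concretely, I would first note that $H^0(\O(1))$ is an $A$-module, hence a $k$-vector space. For each $1\le i\le v$, evaluation at $p_i$ is the $k$-linear map $H^0(\O(1))\to \O(1)_{(p_i)}$ to the fiber; let $W_i$ be its kernel, the space of linear forms vanishing on $p_i$. As recorded in the Setup, very ampleness of $\O(1)$ guarantees a linear form not vanishing at $p_i$, so this evaluation map is nonzero and $W_i\subsetneq H^0(\O(1))$ is proper. Next I would invoke the standard fact that a vector space over an infinite field cannot be covered by finitely many proper subspaces; one quick proof: if $H^0(\O(1))=\bigcup_{i=1}^v W_i$ with $v$ chosen minimal, pick $x\in W_1\setminus\bigcup_{i\ge 2}W_i$ and $y\notin W_1$, and observe that among the infinitely many elements $y+cx$ with $c\in k$ two must lie in a common $W_i$, whence $x\in W_i$, which is impossible for $i\ge 2$ and for $i=1$ forces $y\in W_1$, a contradiction. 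Applying this here yields a form $L$ lying in none of the $W_i$, i.e.\ not vanishing on any $p_i$, which is exactly the assertion.

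I do not anticipate any real difficulty: the entire content is the interaction of \emph{very ampleness} (making each $W_i$ proper) with \emph{$k$ infinite} (making the union proper). The only point needing care is to check that evaluation at $p_i$ is $k$-linear and nonzero, and both are immediate from the discussion in the Setup.
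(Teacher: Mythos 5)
Your proof is correct and follows the same top-level route as the paper: realize the forms vanishing at each $p_i$ as a proper subspace of $H^0(\O(1))$ (using very ampleness, as recorded in the Setup), then invoke the fact that a vector space over an infinite field is not a finite union of proper subspaces (the paper's \Cref{Avoidance}, which the paper states at the level of $A$-submodules but proves via the $k$-vector-space structure). The one place you diverge is in the proof of the avoidance fact itself: the paper enlarges the $V_i$ to hyperplanes, quotients by $\bigcap V_i$, embeds into $\bigoplus V/V_i$ to reduce to the finite-dimensional case, while you give the classical minimal-counterexample pigeonhole argument (two of the $y+cx$ land in a common $W_i$). Both are standard and correct; your version is more self-contained and avoids the choice-flavored step of enlarging to codimension-one subspaces, while the paper's version is a clean reduction to the finite-dimensional statement. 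Either would serve here.
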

\begin{proof}
Sections that vanish on $p_i$ form a proper $A$-submodule of $H^0(\O(1))$ for any $i$.
The conclusion follows from \Cref{Avoidance}.
\end{proof}

\begin{lemma}\label{Avoidance}
A nonzero $A$-module is not the union of finitely many proper submodules.
\end{lemma}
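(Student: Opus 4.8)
The plan is to reduce the statement to the classical fact that a vector space over an infinite field is not a finite union of proper subspaces. Since $A$ contains the infinite field $k$, any $A$-module $M$ is in particular a $k$-vector space, and every $A$-submodule of $M$ is a $k$-linear subspace with the same underlying set; in particular a \emph{proper} $A$-submodule is a proper $k$-subspace. So if $M$ were the union of finitely many proper $A$-submodules, it would be the union of finitely many proper $k$-subspaces, and it suffices to rule this out for a nonzero $k$-vector space $V$.

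Suppose for contradiction that $V = W_1 \cup \dots \cup W_t$ with each $W_i$ a proper subspace, and choose such a representation with $t$ minimal. Then $t \ge 2$: if $t = 1$ then $V = W_1$ is not proper, and if $t = 0$ the empty union cannot equal $V$ since $0 \in V$. By minimality of $t$, $W_1$ is not contained in $W_2 \cup \dots \cup W_t$, so I can pick $x \in W_1 \setminus (W_2 \cup \dots \cup W_t)$; and since $W_1 \ne V$ I can pick $y \in V \setminus W_1$.

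Now consider the vectors $y + \lambda x$ for $\lambda \in k$. Each of them lies in some $W_i$, but none lies in $W_1$: if $y + \lambda x \in W_1$, then since $x \in W_1$ we would get $y \in W_1$, a contradiction. Hence every $y + \lambda x$ lies in $W_2 \cup \dots \cup W_t$. Because $k$ is infinite while there are only finitely many indices, the pigeonhole principle gives distinct $\lambda \ne \lambda'$ in $k$ and an index $i \ge 2$ with $y + \lambda x$ and $y + \lambda' x$ both in $W_i$. Subtracting, $(\lambda - \lambda')x \in W_i$, and dividing by the nonzero scalar $\lambda - \lambda'$ yields $x \in W_i$, contradicting the choice of $x$. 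This completes the argument.

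The proof is elementary; the only points that require attention are the initial reduction from $A$-submodules to $k$-subspaces (which is immediate because the underlying sets coincide) and the use of the infinitude of $k$ in the pigeonhole step, which is precisely where the standing hypothesis that $A$ contains an infinite field enters.
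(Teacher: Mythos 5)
Your proof is correct, and the reduction from $A$-submodules to $k$-subspaces is exactly the paper's first move. After that, however, you take a genuinely different route. The paper enlarges the subspaces to codimension one, quotients by their common intersection, embeds $V$ into $\bigoplus V/V_i$ to force $V$ finite-dimensional, and then appeals to the (unproved, cited-as-known) fact that a finite-dimensional vector space over an infinite field is not a finite union of proper subspaces. You instead give the classical direct argument: take a minimal counterexample $V = W_1 \cup \cdots \cup W_t$, pick $x \in W_1 \setminus \bigcup_{i\ge 2} W_i$ and $y \notin W_1$, observe that the affine line $\{y + \lambda x : \lambda \in k\}$ misses $W_1$ entirely, and use the pigeonhole principle on the infinitely many $\lambda$'s distributed among the remaining $t-1$ subspaces to derive $x \in W_i$ for some $i \ge 2$, a contradiction. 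Your version is more self-contained --- it proves the finite-dimensional case rather than delegating it --- and it makes the role of the infinitude of $k$ completely explicit in the pigeonhole step, which is the point the paper's remark after the lemma (about finite fields) is trying to highlight.
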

\begin{proof}
We claim that a nonzero vector space over an infinite field is not the union of finitely many proper subspaces.
Suppose $V$ is the union of proper subspaces $V_1,\dots, V_n$.
We may enlarge $V_i$ to be codimension one subspaces.
We may quotient $V$ by the intersection of $V_1,\dots, V_n$ and assume the intersection is zero.
In this case, the space $V$ embeds into the direct sum of the quotients $V/V_i$, and therefore $V$ is finite dimensional.
This argument shows that a counter-example in the infinite dimensional case gives a counter-example in the finite dimensional case.
But the statement is true for finite dimensional vector spaces.
\end{proof}

The above statement is not true in general without the assumption that $A$ contains an infinite field.
For example any finite dimensional vector space over a finite field is a finite union of proper subspaces.

\begin{proof}[Proof of \Cref{FiniteBasic}]
Let $L \in H^0(\O(1))$ be a form that does not vanish at $p_i$ for any $i$ as in  \Cref{Nonvanishing}.
We proceed by induction on $v$.

Suppose $v = 1$, and $(s_1,\dots, s_u)$ is $w_1$-basic in $\F$ at $p_1$.
If $u = w_1$, then any choice of $r_j$'s would work.
We may suppose $u>w_1$. 
If $(s_2,\dots, s_u)$ is already $w_1$-basic in $\F$ at $p_1$, then we may choose $r_j = 0$ for all $j$. 
If not, the elements $\{(s_2/L^{a_2})_{(p_1)}, . . . , (s_u/L^{a_u} )_{(p1)}\}$ form a linearly dependent set of vectors in the vector space $\F_{(p_1)}$, and there exists some $2\le l \le u$ such that $(s_l/L^{a_l})_{(p_1)}$ is in the span of $\{(s_{l+1}/L^{a_{l+1}})_{(p_1)},\dots, (s_u/L^{a_u})_{(p_1)}\}$.
In this case, we may take $r_j = 0$ for every $j\ne l$, and choose $r_l = L^{a_j-a_1}$.
It follows that $(s_2,\dots, s_l+r_ls_1,\dots, s_u)$ is $w_1$-basic in $\F$ at $p_1$.

Now we assume $v>1$.
If $w_i = u$ for some $i$, then any choice of $r_j$'s would satisfy the requirement at $p_i$. 
Thus we may consider the same problem but at fewer points, and induction on the number of points $v$ takes care of this case.
We may assume that $u>w_i$ for all $i$.
By the induction hypothesis, there are forms $r_j'\in H^0(\O(a_j-a_1))$ for $2\le j\le u$ such that $s_j':=s_j+r_j's_1$ generate a subsheaf that is $w_i$-basic in $\F$ at $p_1,\dots, p_{v-1}$. 
If $(s_2',\dots, s_u')$ is already $w_v$-basic in $\F$ at $p_v$, then we are done. 
Otherwise, the subsheaf $(s_2',\dots, s_u')$ is $(w_v-1)$-basic in $\F$ at $p_v$.
By the same reason above, there exists $2\le l \le u$ such that $(s_l'/L^{a_l})_{(p_v)}$ is in the span of $\{(s_{l+1}'/L^{a_l+1})_{(p_v)},\dots, (s_{u}'/L^{a_u+1})_{(p_v)}\}$.
We choose $r_l'' := \lambda \cdot L^{a_l-a_1}$ for a nonzero $\lambda \in k$ yet to be determined.
Since $r_l''$ does not vanish on $p_v$, the subsheaf $(s_2',\dots, s_l'+r_l''s_1,\dots, s_u')$ is $w_v$-basic at $p_v$ for any nonzero $\lambda\in k$. 
At each point $p_1,\dots, p_{v-1}$, there is at most one $\lambda\in k$ where $(s_2',\dots, s_l'+r_l''s_1,\dots, s_u')$ fails to be $w_i$-basic at $p_i$ by \Cref{LinearAlg}. 
Since $k$ is an infinite field, we may choose a nonzero $\lambda \in k$ such that $(s_2',\dots, s_l'+r_l''s_1,\dots, s_u')$ remains $w_i$-basic at $p_i$ for all $1\le i\le v-1$.
\end{proof}

We do not know if \Cref{FiniteBasic} remains true without the assumption that $A$ contains an infinite field.

\section{Existence Theorems and Reduction Theorems}

With \Cref{FiniteBasic} in hand, we can now prove two existence theorems of graded basic elements (\Cref{BasicCodim} and \Cref{BasicDepth}) following the treatments of their affine versions in \cite{EE} and \cite{Bruns}.
Since this effort is largely that of a translation, we defer the proofs to the appendix.
Instead, we explore some reduction theorems as consequences of the existence theorems.

\bigskip

In the following, for any natural number $m\in\mathbb{N}$ we define 
\[
\C_m := \{p\in X\mid \dim \O_p \le m\}\text{ and }
\D_m := \{p\in X\mid \depth \O_p \le m\}.
\]

\begin{maintheorem}[Existence theorem, codimension version]\label{BasicCodim}
Let $s_1,\dots, s_u$ be sections of a sheaf $\F$ in degrees $a_1\le \dots \le a_u$. 
Suppose for some $t\ge 1$
\[
(s_1,\dots, s_u) \text{ is }\min(u,m+t-\dim \O_p) \text{-basic in }\F \quad \forall p\in \C_m.
\]
\begin{enumerate}[leftmargin = *,parsep = 0pt]
\item If $u \le t$, then $s_1,\dots, s_u$ are basic in $\F$ at all $p\in \C_m$ trivially. 
\item If $u > t$, then there are $t$ sections $s'_{u-t+1},\dots, s_u'$ of $\F$ in the highest degrees $a_{u-t+1},\dots, a_u$ that are basic in $\F$ at all $p\in \C_m$.
Furthermore, for each $u-t+1\le i \le u$, the section $s_i'$ can be chosen of the form $s_i+r_{i-1} s_{i-1}+\dots + r_1 s_1$ for some $r_j\in H^0(\O(a_i-a_j))$.
\end{enumerate}
\end{maintheorem}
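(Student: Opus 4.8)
I would dispose of part (1) in a line: for $p\in\C_m$ one has $\dim\O_p\le m$, so $m+t-\dim\O_p\ge t\ge u$ and $\min(u,m+t-\dim\O_p)=u$; the hypothesis then already says $(s_1,\dots,s_u)$ is $u$-basic at every $p\in\C_m$, which is the definition of $s_1,\dots,s_u$ being basic there.

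For part (2) the plan is to peel off the lowest-degree section $u-t$ times, iterating the following one-step claim: \emph{if $s_1,\dots,s_n$ are sections of $\F$ in degrees $a_1\le\dots\le a_n$ with $n>t$ and $(s_1,\dots,s_n)$ is $\min(n,m+t-\dim\O_p)$-basic in $\F$ at every $p\in\C_m$, then there exist $r_j\in H^0(\O(a_j-a_1))$ for $2\le j\le n$ with $(s_2+r_2s_1,\dots,s_n+r_ns_1)$ $\min(n-1,m+t-\dim\O_p)$-basic in $\F$ at every $p\in\C_m$.} Starting from $n=u$ and relabelling after each step, one may apply this $u-t$ times (at each step the surviving number of sections still exceeds $t$), ending with $t$ sections that are $\min(t,m+t-\dim\O_p)$-basic at every $p\in\C_m$; since $m+t-\dim\O_p\ge t$ there, these $t$ sections are $t$-basic, hence basic, at every $p\in\C_m$. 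Unrolling, round by round, the substitutions that replace each higher section by itself plus an $H^0(\O)$-multiple of the lower-degree section being dropped, and multiplying out, the $i$-th surviving section takes the form $s_i+\sum_j\rho_{ij}s_j$, the sum running over the dropped indices $j<i$ and $\rho_{ij}\in H^0(\O(a_i-a_j))$ — the degrees always balancing because a section is only ever modified by multiples of sections of strictly lower degree — which, after padding with zero coefficients, is the asserted triangular form.

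I would prove the one-step claim by Noetherian induction, with \Cref{FiniteBasic} playing the role of the affine moving lemma \cite[Lemma 3]{EE}. Set $B=\{p\in\C_m:(s_2,\dots,s_n)\text{ is not }\min(n-1,m+t-\dim\O_p)\text{-basic in }\F\text{ at }p\}$. By the Setup, $\mu(\F_p)-\mu((\F/(s_2,\dots,s_n))_p)$ is the rank of a matrix of sections, hence lower semicontinuous, so on each stratum $\{\dim\O_p=j\}$ the set $B$ is closed; thus $B$ is constructible. Moreover $B\subseteq\{p\in\C_m:\dim\O_p\ge m+t-n+1\}$: where $\dim\O_p\le m+t-n$ the hypothesis makes $(s_1,\dots,s_n)$ $n$-basic, and since $\mu((\F/(s_2,\dots,s_n))_p)\le\mu((\F/(s_1,\dots,s_n))_p)+1$ the tuple $(s_2,\dots,s_n)$ is automatically $(n-1)$-basic there. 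If $B=\emptyset$ take all $r_j=0$; otherwise, let $\eta_1,\dots,\eta_k$ be the generic points of the irreducible components of $\overline B$ — they lie in $B$, as $B$ is constructible and dense in $\overline B$, so $\min(n-1,m+t-\dim\O_{\eta_i})=m+t-\dim\O_{\eta_i}$ — and apply \Cref{FiniteBasic} to $s_1,\dots,s_n$ at $\{\eta_1,\dots,\eta_k\}$ to produce $r_j\in H^0(\O(a_j-a_1))$ for which $(s_2',\dots,s_n'):=(s_2+r_2s_1,\dots,s_n+r_ns_1)$ is still that-basic at every $\eta_i$; the points $\eta_i$ thus leave the bad locus.

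The step I expect to be the main obstacle is to check that this single move makes genuine Noetherian progress: that the new bad locus $B'$ of $(s_2',\dots,s_n')$ satisfies $\overline{B'}\subsetneq\overline B$ (or at least is dominated by an auxiliary constructible set that strictly shrinks under iteration), so that the descending chain of closed subsets of $X$ terminates — necessarily at $B=\emptyset$, which finishes the one-step claim and hence the theorem. A fibrewise computation shows a point can pass from good to bad only when, there, the fibre-image of $s_1$ already lies in the span of those of $s_2,\dots,s_n$, the tuple $(s_1,\dots,s_n)$ is \emph{exactly} $(m+t-\dim\O_p)$-basic, and the twist drops the relevant fibre-span by one; by \Cref{LinearAlg} this last event is governed, per coordinate and per point, by a single forbidden scalar, and the multipliers from \Cref{FiniteBasic} — built out of one linear form not vanishing at $\eta_1,\dots,\eta_k$ together with scalars chosen in the infinite field $k$ to dodge those finitely many values at the relevant points — are tailored precisely to keep no new generic bad point from appearing. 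In the affine case \cite{EE} secure this more cheaply by letting the multiplier vanish along the old bad locus; the point of \Cref{FiniteBasic} is to substitute for that in the prescribed degree $a_j-a_1$, where such a vanishing form need not exist. The remaining semicontinuity and constructibility verifications (including for the strata $\{\dim\O_p=j\}$) are routine, and are carried out in the appendix.
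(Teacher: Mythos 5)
Your reduction of part (2) to the one-step claim and your computation for part (1) match the paper, and the triangular form of the output sections is handled the same way. But your proposed proof of the one-step claim differs essentially from the paper's, and as you yourself flag, it has a genuine gap at exactly the crucial spot.

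The paper never runs a Noetherian induction on a bad locus. Its key input is \Cref{Finite2} (built on \Cref{Finite1}, which is a semicontinuity statement phrased via Fitting ideals $\Fitt_i(\F/\F')$ and generic points of the $Z_i$): if $(s_1,\dots,s_u)$ is $\min(u,m+t-\dim\O_p)$-basic at every $p\in\C_m$, then it is automatically $\min(u,m+t+1-\dim\O_p)$-basic at all but \emph{finitely many} $p\in\C_m$. With that, one application of \Cref{FiniteBasic} at those finitely many exceptional points $p_1,\dots,p_v$ finishes the one-step claim: at the $p_i$ the modified $(u-1)$-tuple is $\min(u-1,m+t-\dim\O_p)$-basic by \Cref{FiniteBasic}, and at every other $p\in\C_m$ the original tuple was $\min(u,m+t+1-\dim\O_p)$-basic, so dropping one generator can cost at most one, leaving $\min(u-1,m+t-\dim\O_p)$-basicness with no need to control the multipliers there at all.

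Your route instead applies \Cref{FiniteBasic} only at the generic points $\eta_1,\dots,\eta_k$ of $\overline B$ and then asserts the bad locus shrinks. This is the gap. \Cref{FiniteBasic} gives you no control at any point other than the $\eta_i$. After the modification, a point $p\in\overline B\setminus\{\eta_i\}$, or even a point outside $\overline B$ where $(s_1)_{(p)}$ happens to lie in the fibre-span of $(s_2)_{(p)},\dots,(s_n)_{(p)}$, can become bad; the ``single forbidden scalar'' from \Cref{LinearAlg} varies from point to point and you have no way to dodge it simultaneously at the infinitely many points of $\C_m$ using a finite-point lemma. Without showing $B'\subseteq\overline B$ (or some other strictly decreasing Noetherian invariant), the induction is not established, and your own fibrewise sketch does not close this. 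What is missing is precisely the paper's finiteness step: some form of \Cref{Finite1}--\Cref{Finite2} that upgrades the hypothesis to the ``$+1$'' level away from a finite set, after which the Noetherian machinery is unnecessary.

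Separately, a small slip: your containment $B\subseteq\{p\in\C_m:\dim\O_p\ge m+t-n+1\}$ should read $\dim\O_p\ge m+t-n+2$; where $\dim\O_p\le m+t-n+1$ one has $m+t-\dim\O_p\ge n-1$, so already $(s_1,\dots,s_n)$ is $(n-1)$-basic, hence $(s_2,\dots,s_n)$ is $(n-2)$-basic but you need $(n-1)$-basicness — in fact the hypothesis only gives $\min(n,m+t-\dim\O_p)=n-1$ there, not $n$, so your ``automatic'' claim that $(s_2,\dots,s_n)$ is $(n-1)$-basic at such $p$ is exactly where you would again need the ``$+1$'' finiteness of \Cref{Finite2}.
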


Phrased in English, if the condition of the theorem is satisfied then after a unipotent change of coordinates we can choose $\min(u,t)$ sections of highest degrees that are basic in $\F$ at all points of codimension up to $m$ in $X$.

\begin{maintheorem}[Existence theorem, depth version]\label{BasicDepth}
Let $\F$ be a sheaf where 
\begin{align*}
\mu(\F,q)\le \mu(\F,p)\quad \forall p,q\in X \text{ such that }\depth \O_q < \depth \O_p \le m \tag{$\dagger$}\label{Cond}.
\end{align*} 
Let $s_1,\dots, s_u$ be sections of $\F$ in degrees $a_1\le \dots \le a_u$. 
Suppose for some $t\ge 1$
\[
(s_1,\dots, s_u)\text{ is }\min(u,m+t-\depth \O_p)\text{-basic in }\F \quad \forall p\in \D_m.
\]
\begin{enumerate}[leftmargin=*,parsep = 0pt]
\item If $u \le t$, then $s_1,\dots, s_u$ are basic in $\F$ at all $p\in \D_m$ trivially. 
\item If $u > t$, then there are $t$ sections $s'_{u-t+1},\dots,s_u'$ of $\F$ in the highest degrees $a_{u-t+1},\dots, a_u$ that are basic in $\F$ at all $p\in \D_m$.
Furthermore, for each $u-t+1\le i \le u$, the section $s_i'$ can be chosen of the form $s_i+r_{i-1} s_{i-1}+\dots + r_1 s_1$ for some $r_j\in H^0(\O(a_i-a_j))$.
\end{enumerate}
\end{maintheorem}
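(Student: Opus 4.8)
The plan is to transcribe Bruns' proof of the affine existence theorem \cite{Bruns} essentially verbatim, substituting \Cref{FiniteBasic} for \cite[Lemma 3]{EE} at every invocation. Part (1) is immediate from the definitions: if $p\in\D_m$ then $\depth\O_p\le m$, hence $m+t-\depth\O_p\ge t\ge u$, so the hypothesis asserts that $(s_1,\dots,s_u)$ is $u$-basic in $\F$ at $p$, which by definition means $s_1,\dots,s_u$ are basic in $\F$ at $p$. For part (2) I would induct on $u-t\ge 1$, the inductive step being a \emph{one-step reduction}: assuming $(s_1,\dots,s_u)$ is $\min(u,m+t-\depth\O_p)$-basic in $\F$ at every $p\in\D_m$ and $u>t$, one produces forms $r_j\in H^0(\O(a_j-a_1))$ for $2\le j\le u$ so that the modified sections $s_j':=s_j+r_js_1$ satisfy: $(s_2',\dots,s_u')$ is $\min(u-1,m+t-\depth\O_p)$-basic in $\F$ at every $p\in\D_m$. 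Granting this, the sections $s_2',\dots,s_u'$ together with $u-1$ and the same $t$ again satisfy the hypothesis of the theorem (the condition $(\dagger)$ involves only $\F$), and $(u-1)-t<u-t$, so the induction closes; composing the successive unipotent substitutions and keeping track of twists produces sections of the claimed form $s_i+r_{i-1}s_{i-1}+\dots+r_1s_1$ with $r_j\in H^0(\O(a_i-a_j))$.

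The content is therefore concentrated in the one-step reduction. Pointwise, nothing need be done where there is room to spare: if $m+t-\depth\O_p\ge u$ then $s_1,\dots,s_u$ are linearly independent in the fiber $\F_{(p)}$, hence so is any subset of $s_2',\dots,s_u'$, and $(s_2',\dots,s_u')$ is $(u-1)$-basic at $p$ regardless of the $r_j$. At the remaining points, where $c(p):=m+t-\depth\O_p$ lies between $t$ and $u-1$, the hypothesis forces $(s_1,\dots,s_u)$ to be $c(p)$-basic and one needs $(s_2',\dots,s_u')$ to be $c(p)$-basic; the modification must both \emph{preserve} basicness where $(s_2,\dots,s_u)$ is already $c(p)$-basic at $p$ (where \Cref{LinearAlg} forbids at most one value of the modifying scalar) and \emph{create} it where $(s_2,\dots,s_u)$ is only $(c(p)-1)$-basic at $p$ (where, the full tuple being $c(p)$-basic, $s_1$ supplies the missing generator, so a generic nonzero modification works). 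For finitely many points both tasks are discharged simultaneously by \Cref{FiniteBasic}, which crucially returns the $r_j$ in the prescribed degrees $a_j-a_1$.

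To reach all of $\D_m$ I would run a Noetherian induction on $X$ as in \cite{Bruns}: the locus in $\D_m$ at which surgery is still required is contained in a closed subset of $X$, one handles its finitely many generic points by \Cref{FiniteBasic}, and the monotonicity hypothesis $(\dagger)$ is exactly what makes a modification that works at a generic point continue to work at the lower-depth specializations lying in $\D_m$; after each round this closed subset shrinks strictly, so the process terminates. I expect the main obstacle to be this final step — marshalling the Noetherian induction along the depth stratification of $\D_m$ while never leaving the permitted range of twists — but since \Cref{FiniteBasic} already absorbs the new graded difficulty, the remainder is a faithful translation of \cite{Bruns}, which we carry out in the appendix.
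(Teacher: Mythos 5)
Your skeleton matches the paper's: part (1) by unwinding the hypothesis, part (2) by iterating a one-step reduction $u \to u-1$ which at each stage modifies only $s_2,\dots,s_u$ by multiples of $s_1$ in the prescribed degrees, with \Cref{FiniteBasic} doing the work at finitely many problematic points and the remaining points having enough slack to survive the modification automatically. The degree bookkeeping is also right. So you have correctly located the graded novelty (\Cref{FiniteBasic}) and the overall outline agrees with the paper's proof in the appendix.

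The gap is in the justification that the set of problematic points is finite. You frame it as ``the locus at which surgery is still required is contained in a closed subset; handle its generic points; $(\dagger)$ propagates the fix to lower-depth specializations; iterate a Noetherian induction.'' This is borrowed from the codimension version, where the relevant comparison is with \emph{generalizations} of $p$ (automatically of smaller codimension, lying in the same Fitting ideal locus $Z_i$, which is closed), so that the bad points are forced to be generic points of components of the $Z_i$'s. But depth does not interact with the Zariski topology this way: a specialization of $p$ need have neither larger nor smaller depth, so ``lower-depth specializations'' is not a well-posed notion to induct on, and there is no reason the bad points should be generic points of a closed set. What the paper does instead (\Cref{FiniteDepth1} and \Cref{FiniteDepth2}) is purely local-algebraic: fix the two numerical invariants $a = \depth\O_p$ and $b = \mu(\F,p) - \min(u,m+t+1-a)$; using the original hypothesis together with $(\dagger)$, show that no $q$ with $\depth\O_q < a$ can lie in $Z_b(\F/(s_1,\dots,s_u))$, hence $\grade\Fitt_b \ge a$; then pick a length-$a$ regular sequence $\underline{x}$ inside the Fitting ideal on each affine chart and observe that a point of depth exactly $a$ in $Z_b$ must be an associated prime of $R/(\underline{x})$, of which there are only finitely many. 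This grade/regular-sequence argument is where $(\dagger)$ is actually consumed — it is not a propagation statement about the modification, but a constraint on the Fitting ideal locus \emph{before} any modification is made. Your proposal does not supply this argument, and your proposed substitute (Noetherian induction along a depth stratification) rests on a specialization--depth comparison that fails in general.
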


Note that if $\F$ is locally-free at all points $p\in \D_m$, then $\F$ satisfies the condition (\ref{Cond}).

\bigskip

A new feature in the theory of graded basic elements is the choice of degrees.
Since we can only modify sections of higher degrees by those of lower degrees, graded basic elements produced this way tend to have high degrees. 
Therefore, graded basic elements of low degrees are particularly interesting.
In fact, there is always a minimal choice of degrees.
See \cite{Zhang} for an exploration of this idea which leads to the definition of a minimal sheaf.

\bigskip

The theorem stated in the introduction follows immediately from \Cref{BasicCodim}.

\newtheorem*{intro}{\Cref{Intro}}
\begin{intro}
If $\dim X = d$ and $\F$ is a globally generated sheaf where $\mu(\F_p) > d$ for all $p\in X$,
then there is a section $s$ of $\F$ such that $\mu((\F/(s))_p) = \mu(\F_p)-1$ for all $p\in X$. 
\end{intro}
\begin{proof}
Let $ \O^u \to  \F$ be a surjective map corresponding to sections $s_1,\dots, s_u$. 
The assumption on $\F$ implies that $(s_1,\dots, s_u)$ is $\min(u, n+1-\dim \O_p)$-basic in $\F$ at all $p\in X$.
An application of \Cref{BasicCodim} with $t = 1$ yields the claim. 
\end{proof}

\begin{corollary}[Serre {\cite[p148]{Mumford}}]
If $X$ is an algebraic variety of dimension $d$ and $\E$ is a globally generated bundle on $X$ of rank $r >d$, then $\E$ has a section $s$ that is nowhere vanishing.
\end{corollary}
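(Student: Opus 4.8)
The plan is to deduce the statement directly from \Cref{Intro}, with $\F=\E$. Interpreting ``algebraic variety'' as a scheme that is quasi-projective over an infinite field --- the setting of \Cref{Intro} --- I would first observe that since $\E$ is a vector bundle of rank $r$ its stalk $\E_p$ is free of rank $r$ over $\O_p$, so that $\mu(\E_p)=r>d=\dim X$ for every $p\in X$; and that ``globally generated'' is precisely the hypothesis that $\E$ is generated by its global sections. Thus \Cref{Intro} applies and produces a section $s$ of $\E$ such that, writing $\E'=(s)$ for the subsheaf it generates,
\[
\mu\bigl((\E/\E')_p\bigr)=\mu(\E_p)-1=r-1\qquad\text{for all }p\in X.
\]
(Tracing the proof of \Cref{Intro}, which modifies degree-$0$ sections by degree-$0$ forms, one sees that $s$ may be taken to be an honest global section.)

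Next I would check that such an $s$ is nowhere vanishing. Suppose instead that $s$ vanishes at some $p\in X$, that is, $s_{(p)}=0$ in the fiber $\E_{(p)}=\E_p\otimes_{\O_p}k(p)$. Then the image of $\O_p\xrightarrow{s_p}\E_p$ lies in $\mathfrak{m}_p\E_p$, so the surjection $\E_p\twoheadrightarrow(\E/\E')_p$ becomes an isomorphism after applying $\otimes_{\O_p}k(p)$; by Nakayama's lemma $\mu\bigl((\E/\E')_p\bigr)=\mu(\E_p)=r$, contradicting the displayed equality. Hence $s_{(p)}\ne 0$ for all $p$, i.e.\ $s$ is nowhere vanishing.

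The whole argument is short once \Cref{Intro} is in hand; the only step requiring care is the passage between the numerical conclusion of \Cref{Intro} (that $\mu((\E/(s))_p)$ drops by one at every point) and the geometric assertion that $s$ vanishes nowhere. That passage is the Nakayama computation above, and it uses in an essential way that $\E$ is locally free, so that $\mu(\E_p)$ is the constant $r$ and a drop by one is incompatible with $s_p\in\mathfrak{m}_p\E_p$. I do not expect any further obstacle.
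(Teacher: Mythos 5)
Your proof is correct and matches the paper's intent. The paper states this result as a direct corollary of \Cref{Intro} with no separate argument, and the deduction you supply --- applying \Cref{Intro} to $\F=\E$, noting $\mu(\E_p)=r>d$ since $\E$ is locally free of rank $r$, and then using the Nakayama computation to convert the numerical conclusion $\mu((\E/(s))_p)=r-1$ into nonvanishing of $s_{(p)}$ --- is precisely the intended one; your observation that the section can be taken to be an honest global section (degree $0$) is also correct, since all $a_i=0$ in the application of \Cref{BasicCodim}.
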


In particular, every vector bundle $\E$ can be ``reduced" to a bundle of rank at most $\dim X$ by modding out nowhere vanishing sections. 
We generalize this reduction theorem when $\E$ is only locally-free up to a given codimension.
 
\begin{definition}
We say a sheaf $\F$ is $(F_m)$ if $\F_p$ is free over $\O_p$ for all $p\in \C_m$.  
\end{definition}

Recall that a sheaf $\F$ has \emph{well-defined rank} $r$ if $\F_p$ is free over $\O_p$ of rank $r$ for all associated points $p$ of $X$.

\begin{theorem}\label{Reduction}
Suppose all associated points of $X$ have codimension $\le m$.
Let $\F$ be an $(F_m)$ sheaf of well-defined rank $r>m$. 
For any surjective map $\bigoplus_{i = 1}^u\O(-a_i) \xrightarrow{\varphi} \F$, there exists a direct summand $\L$ of $\bigoplus_{i = 1}^u\O(-a_i)$ of rank $(r-m)$ that injects into $\F$ via $\varphi$, such that $\F/\L$ is $(F_m)$ of well-defined rank $m$.
Furthermore, we can choose $\L$ to be a summand involving the largest $(r-m)$ integers $a_i$.
\end{theorem}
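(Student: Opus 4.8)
The plan is to produce $\L$ by applying the codimension version of the existence theorem, \Cref{BasicCodim}, to the sections $s_1,\dots,s_u$ underlying $\varphi$, taking $t=r-m$.

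First I would verify the hypothesis of \Cref{BasicCodim}. Since $\F$ is $(F_m)$, for each $p\in\C_m$ the stalk $\F_p$ is a free $\O_p$-module, say of rank $n_p$; localizing at a minimal prime of $\O_p$ — which corresponds to a generic, hence associated, point $q$ of $X$ specializing to $p$ — exhibits $\F_q$ as free of rank $n_p$, and since $\F$ has well-defined rank $r$ we get $n_p=r$. Thus $\mu(\F_p)=r$ for every $p\in\C_m$; in particular $u\ge\mu(\F_q)=r>r-m=t\ge1$ because $\varphi$ is surjective and $r>m$. Moreover $\varphi$ surjective gives $\F/(s_1,\dots,s_u)=0$, so $(s_1,\dots,s_u)$ is $w$-basic in $\F$ at every $p\in\C_m$ for every $w\le\mu(\F_p)=r$; since $m+t-\dim\O_p=r-\dim\O_p\le r$ for $p\in\C_m$, the tuple $(s_1,\dots,s_u)$ is $\min(u,m+t-\dim\O_p)$-basic at all $p\in\C_m$, which is exactly what is needed.

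Next I would invoke \Cref{BasicCodim}(2). As $u>t$, it yields sections $s_i'=s_i+r_{i-1}s_{i-1}+\dots+r_1s_1$ for $u-t+1\le i\le u$, with $r_j\in H^0(\O(a_i-a_j))$, in the highest degrees $a_{u-t+1},\dots,a_u$, that are basic in $\F$ at every $p\in\C_m$. These define $\psi\colon\bigoplus_{i=u-t+1}^u\O(-a_i)\to\F$, and $\psi=\varphi\circ\iota$ where $\iota$ sends the $i$-th generator $e_i$ to $e_i+r_{i-1}e_{i-1}+\dots+r_1e_1$. Extending $\iota$ by the identity on $e_1,\dots,e_{u-t}$ gives a unipotent, hence invertible, endomorphism of $\bigoplus_{i=1}^u\O(-a_i)$, so $\L:=\im\iota$ is a direct summand of $\bigoplus_{i=1}^u\O(-a_i)$, isomorphic to $\bigoplus_{i=u-t+1}^u\O(-a_i)$ and therefore built from the largest $t=r-m$ of the integers $a_i$; under this isomorphism $\varphi|_\L$ is identified with $\psi$.

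It remains to see that $\psi$ is injective with cokernel $(F_m)$ of well-defined rank $m$. Fix $p\in\C_m$. Since $s'_{u-t+1},\dots,s'_u$ are basic in $\F$ at $p$, the fiber-dimension count of the Setup shows their images in $\F_{(p)}$ are linearly independent; as $\F_p$ is free of rank $r$, Nakayama makes these $t$ elements part of an $\O_p$-basis of $\F_p$, so $\psi_p$ is a split injection onto a free rank-$t$ summand of $\F_p$ with cokernel free of rank $r-t=m$. Hence $\ker\psi$ vanishes at every point of $\C_m$; being a subsheaf of the locally free sheaf $\bigoplus_{i=u-t+1}^u\O(-a_i)$, its associated points lie among the associated points of $X$, which are contained in $\C_m$ by hypothesis, so $\ker\psi=0$ and $\varphi|_\L\colon\L\hookrightarrow\F$. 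The same local computation shows $\F/\L=\coker\psi$ is free of rank $m$ at every point of $\C_m$, in particular at every associated point of $X$; hence $\F/\L$ is $(F_m)$ of well-defined rank $m$, as required. I expect the only genuinely delicate step to be this last one — the passage from basicness at all points of $\C_m$ to global injectivity of $\L\hookrightarrow\F$ — which is precisely where the hypothesis that the associated points of $X$ all lie in $\C_m$ is used; the rest amounts to tracking degrees and the unipotent change of basis delivered by \Cref{BasicCodim}.
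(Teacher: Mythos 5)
Your proof is correct and follows essentially the same route as the paper: apply \Cref{BasicCodim} with $t=r-m$ to the sections underlying $\varphi$, then use the hypothesis that all associated points of $X$ lie in $\C_m$ to upgrade pointwise injectivity and local freeness to global injectivity and well-defined rank. You spell out more carefully than the paper why $\mu(\F_p)=r$ on $\C_m$, why $u>t$, and how the unipotent change of basis realizes $\L$ as a direct summand, but the underlying argument is the same.
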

\begin{proof}
Let $(s_1,\dots, s_u)$ be the sections of $\F$ in degrees $a_1\le \dots \le a_u$ corresponding to $\varphi$. 
Since $(s_1,\dots, s_u)$ is $\min(u,r)$-basic in $\F$ at all $p\in \C_m$, it is $\min(u,m+(r-m)-\dim \O_p)$-basic in $\F$ at all $p\in \C_m$. 
By \Cref{BasicCodim}, there is a rank $(r-m)$ summand $\L$ of $\bigoplus_{i = 1}^u\O(-a_i)$  mapping to $\F$ giving $(r-m)$ sections that are part of a basis of $\F_p$ at each $p\in \C_m$. 
It follows that $\F/\varphi(\L)$ is locally free at all $p\in \C_m$. 
Since all associated points of $X$ are contained in $\C_m$, it follows that $\F/\varphi(\L)$ has well-defined rank $m$. 
We also conclude that $\varphi$ is injective since it is injective at all associated points of $X$ and $\L$ is locally-free.
\end{proof}

The condition on $X$ in the above theorem is mild. 
For example it is satisfied when $X$ is integral, or normal, or Cohen-Macaulay etc.

\bigskip

There is a parallel reduction theorem, due to Bruns in the affine case.

\begin{definition}
We say $\F$ is $(T_m)$ if $\depth \F_p \ge \min(m,\depth\O_p)$.
\end{definition}
If $X$ is Gorenstein in codimension $m$, then the condition $(T_m)$ is the same as being $m$-torsionless (see \cite{Bruns}).
If $X$ is Cohen-Macaulay, then the condition $(T_m)$ is the same as the Serre's condition $(S_m)$. 

\begin{theorem}[cf. {\cite[Theorem 2]{Bruns}}]\label{Reduction2}
Let $\F$ be a $(T_m)$ sheaf of well-defined rank $r>m$.
Suppose $\F_p$ has finite projective dimension over $\O_p$ for all $p\in \D_m$.
For any surjective map of the form $\bigoplus_{i = 1}^u\O(-a_i) \xrightarrow{\varphi} \F$, there exists a direct summand $\L$ of $\bigoplus_{i = 1}^u\O(-a_i)$  of rank $(r-m)$ that injects into $\F$ via $\varphi$, such that $\F/\L$ is $(T_m)$ of well-defined rank $m$.
Furthermore, we can choose $\L$ to be a summand of $\bigoplus_{i = 1}^u\O(-a_i)$ involving the largest $(r-m)$ integers $a_i$.
\end{theorem}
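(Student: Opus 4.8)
The plan is to mirror the proof of \Cref{Reduction} almost verbatim, with \Cref{BasicDepth} in place of \Cref{BasicCodim} and the depth filtration $\D_m$ in place of $\C_m$. The one genuinely new ingredient is that the homological hypotheses must first be converted into a geometric statement. First I would show that $\F$ is locally free at every $p\in\D_m$: for such a $p$ we have $\depth\O_p\le m$, so the condition $(T_m)$ forces $\depth\F_p\ge\min(m,\depth\O_p)=\depth\O_p$, while the assumed finiteness of the projective dimension of $\F_p$ over $\O_p$ together with the Auslander--Buchsbaum formula $\op{pd}\F_p+\depth\F_p=\depth\O_p$ forces $\depth\F_p\le\depth\O_p$; hence $\op{pd}\F_p=0$ and $\F_p$ is free. (Here $\F_p\ne 0$, since $\F$ has well-defined rank $r\ge 1$, so $\op{Supp}\F$ contains every associated point of $X$ and therefore all of $X$.) In particular $\mu(\F_p)=r$ for all $p\in\D_m$, so $\F$ satisfies the condition (\ref{Cond}) required by \Cref{BasicDepth}, as observed in the remark following that theorem.

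Next I would set $t:=r-m\ge 1$ and let $s_1,\dots,s_u$ be the sections in degrees $a_1\le\dots\le a_u$ corresponding to $\varphi$. Since $\varphi$ is surjective and $\F_p\cong\O_p^{r}$ at each associated point $p$ of $X$ (and every such $p$ lies in $\D_m$), a surjection $\O_p^{u}\twoheadrightarrow\O_p^{r}$ of free modules gives $u\ge r>t$, so part (2) of \Cref{BasicDepth} applies. Surjectivity of $\varphi$ also gives $\coker\varphi=0$, hence $(s_1,\dots,s_u)$ is $\min(u,r)$-basic, and a fortiori $\min(u,m+t-\depth\O_p)$-basic, in $\F$ at every $p\in\D_m$. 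Part (2) of \Cref{BasicDepth} then yields sections $s_i'=s_i+r_{i-1}s_{i-1}+\dots+r_1s_1$ for $u-t+1\le i\le u$, in the highest degrees $a_{u-t+1},\dots,a_u$, that are basic in $\F$ at all $p\in\D_m$. The substitution $e_i\mapsto e_i+\sum_{j<i}r_je_j$ for $i>u-t$ and $e_i\mapsto e_i$ for $i\le u-t$ is a unipotent, hence invertible, graded endomorphism $\psi$ of $F:=\bigoplus_{i=1}^{u}\O(-a_i)$, so $\varphi':=\varphi\circ\psi$ is again surjective; I would take $\L\subseteq F$ to be the rank-$(r-m)$ direct summand spanned by the last $t$ coordinates, which involves the largest $t$ of the integers $a_i$ and on which $\varphi'$ restricts to the sections $s_i'$. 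Since $\L$ is locally free and $\varphi'|_\L$ is injective at every associated point of $X$ (there the $s_i'$ form part of a basis of the free module $\F_p$), the composite $\L\hookrightarrow F\xrightarrow{\varphi'}\F$ is injective; write $\L':=\varphi'(\L)\cong\L$.

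Finally I would check that $\F':=\F/\L'$ is $(T_m)$ of well-defined rank $m$. For $p\in\D_m$ the module $\F_p$ is free and the $(s_i'/L^{a_i})_p$ extend to a basis of it, so $\F'_p$ is free of rank $r-(r-m)=m$; applying this at associated points gives well-defined rank $m$, and moreover $\depth\F'_p=\depth\O_p=\min(m,\depth\O_p)$. For $p\notin\D_m$, i.e.\ $\depth\O_p>m$, I would apply the standard depth inequality $\depth C\ge\min(\depth A-1,\depth B)$ for a short exact sequence $0\to A\to B\to C\to 0$ to $0\to\L'_p\to\F_p\to\F'_p\to 0$; since $\L'_p$ is free we have $\depth\L'_p=\depth\O_p\ge m+1$, and $(T_m)$ gives $\depth\F_p\ge\min(m,\depth\O_p)=m$, so $\depth\F'_p\ge m=\min(m,\depth\O_p)$. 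Hence $\F'$ is $(T_m)$, completing the argument.

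With \Cref{BasicDepth} already in hand, I do not anticipate a serious obstacle; the one step demanding real care is the opening reduction, namely that $(T_m)$ together with finite projective dimension forces $\F$ to be locally free on $\D_m$. This is exactly what makes the condition (\ref{Cond}) available, lets one read off the basicness hypothesis needed to invoke \Cref{BasicDepth}, and guarantees that $\F/\L'$ is free --- hence of the correct depth --- at all points of $\D_m$. Once that is in place, the remainder runs in parallel with \Cref{Reduction}.
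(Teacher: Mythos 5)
Your proposal is correct and runs along the same lines as the paper's proof: locally freeness on $\D_m$ from $(T_m)$ plus Auslander--Buchsbaum, then \Cref{BasicDepth} with $t=r-m$ to produce the summand $\L$ in the top degrees, then the depth lemma on the short exact sequence to verify $(T_m)$ at points with $\depth\O_p>m$. You simply supply some checks that the paper leaves implicit (that condition (\ref{Cond}) holds since $\mu(\F_p)=r$ is constant on $\D_m$, and that $u\ge r>t$ so part~(2) of \Cref{BasicDepth} applies); the substance is the same.
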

\begin{proof}
For $p\in \D_m$, the module $\F_p$ has finite projective dimension over $\O_p$ and $\depth \F_p = \depth \O_p$ by the assumption of $(T_m)$.  
It follows from the Auslander-Buchsbaum formula that $\F_p$ is in fact free over $\O_p$.

Let $s_1,\dots, s_u$ be twisted sections of $\F$ corresponding to $\varphi$. 
Since $(s_1,\dots, s_u)$ is $\min(u,m+(r-m)-\depth \O_p)$-basic in $\F$ at all $p\in \D_m$, it follows from \Cref{BasicDepth} that we may find a rank $(r-m)$ summand $\L$ corresponding to sections of the highest $(r-m)$ degrees that form part of a basis of $\F_p$ at each $p\in \D_m$. 
It follows that $\F/\varphi(\L)$ is locally free at all $p\in \D_m$. 
Since $\varphi:\L \to \F$ is injective at all $p\in \D_0$, i.e. all associated points of $X$, we see that $\varphi$ is injective and $\F/\L$ has well-defined rank $m$.

If $p\in X$ is such that $\depth \O_p>m$, then the depth lemma applied to the exact sequence
\[
0 \to \L_p \to \F_p \to (\F/\L)_p \to 0
\] 
implies that $\depth (\F/\L)_p \ge \min(\depth \F_p,\depth \L_p-1)\ge m$.
\end{proof}

In the following, we define the reduction of a sheaf and prove a factorization theorem.

\begin{definition}
A \emph{reduction} of a sheaf $\F$ is a surjective map $f:\F \to \F'$, where $\ker f$ is of the form $\bigoplus_{i = 1}^u \O(-a_i)$ for some integers $a_i$.
We say a reduction $f:\F \to \F''$ \emph{factors through} another reduction $g:\F \to \F'$ if there is a reduction $h:\F'\to \F''$ such that the following diagram commutes
\[
\begin{tikzcd}
\F \arrow[r,"g"] \arrow[rd,swap,"f"] & \F' \arrow[d,"h"]\\
 & \F''.
\end{tikzcd}
\] 
\end{definition}

\begin{maintheorem}[Factorization theorem]\label{Factor}
Let $f:\F \to \F''$ be a reduction of a sheaf $\F$.
Suppose $n\ge m$ and $v\ge u$ are four natural numbers where the following hold
\begin{enumerate}[leftmargin = *, parsep = 0pt]
\item $\F''$ is $(T_m)$ of well-defined rank $u$ and $u+\rank \ker f \ge v$,
\item $\F$ is $(T_n)$ and $\F_p$ has finite projective dimension over $\O_p$ for all $p\in \D_n$, 
\item $\mu(\F''_p) \le v-n+\depth \O_p$ for all $p\in \D_n-\D_m$.
\end{enumerate}  
Then $f$ factors through a reduction $g:\F \to \F'$ where $\F'$ is $(T_n)$ of well-defined rank $v$.
Moreover, the factor map $h:\F' \to \F''$ can be chosen such that $\ker h$ is a summand of $\ker f$ involving the smallest $(v-u)$ degrees.
\end{maintheorem}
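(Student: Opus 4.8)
The plan is to realize $\F'$ as a quotient $\F/\L$, where $\L\subseteq\ker f$ is a free direct summand of rank $\tau := u+\rank\ker f-v\ge 0$, chosen so that $\F/\L$ is locally free --- hence $(T_n)$ --- along $\D_n$; the summand $\L$ will be extracted from the tautological generators of $\ker f$ by the depth existence theorem \Cref{BasicDepth}. If $\tau = 0$ take $\F' = \F$, $g = \op{id}$, $h = f$; so assume $\tau\ge 1$. Write $\ker f = \bigoplus_{i=1}^{\rho}\O(-a_i)$ with $\rho = \rank\ker f$ and $a_1\le\dots\le a_\rho$, and let $t_1,\dots,t_\rho$ be the corresponding sections of $\F$, which lie in and generate the subsheaf $\ker f\subseteq\F$, so that $\F/(t_1,\dots,t_\rho) = \F''$. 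From hypothesis (2) and the Auslander--Buchsbaum formula, $\F_p$ is free for every $p\in\D_n$; hence $\F$ is locally free on $\D_n$, satisfies condition (\ref{Cond}), and has well-defined rank $u+\rho$. The exact sequence $0\to\ker f\to\F\to\F''\to0$ exhibits $\F''_p$ with projective dimension $\le 1$ over $\O_p$ for $p\in\D_n$, so $(T_m)$ together with Auslander--Buchsbaum makes $\F''_p$ free of rank $u$ for $p\in\D_m$.

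The heart of the argument is to verify that $(t_1,\dots,t_\rho)$ is $\min(\rho,\,n+\tau-\depth\O_p)$-basic in $\F$ at every $p\in\D_n$, so that \Cref{BasicDepth} applies with its ``$u$'' equal to $\rho$ and its ``$t$'' equal to $\tau$. Since $\mu\bigl((\F/(t_1,\dots,t_\rho))_p\bigr) = \mu(\F''_p)$ and $\mu(\F_p) = u+\rho$ on $\D_n$, the subsheaf $(t_1,\dots,t_\rho)$ is precisely $(u+\rho-\mu(\F''_p))$-basic at $p$. For $p\in\D_m$ this number is $\rho$, which dominates the required value; for $p\in\D_n\setminus\D_m$ hypothesis (3) gives $\mu(\F''_p)\le v-n+\depth\O_p$, whence $u+\rho-\mu(\F''_p)\ge n+\tau-\depth\O_p$. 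I expect this bookkeeping to be the main obstacle: it is the point at which one converts the intrinsic basic rank of $\ker f$ into the affine-in-$\depth\O_p$ bound demanded by \Cref{BasicDepth}, and it is why hypothesis (3) is imposed only over $\D_n\setminus\D_m$ while the $(T_m)$ hypothesis on $\F''$ is what controls $\mu(\F''_p)$ over $\D_m$.

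\Cref{BasicDepth} then produces sections $s'_{v-u+1},\dots,s'_\rho$ of $\F$ in the top degrees $a_{v-u+1},\dots,a_\rho$, each of the form $s'_i = t_i+r_{i-1}t_{i-1}+\dots+r_1t_1$ with $r_j\in H^0(\O(a_i-a_j))$, that are basic in $\F$ at all $p\in\D_n$ (when $v=u$ no modification is needed and $s'_i = t_i$). Each $s'_i$ lies in $\ker f$, and the substitution sending $t_i$ to $s'_i$ for $i>v-u$ and fixing $t_i$ for $i\le v-u$ is a unipotent graded automorphism of $\bigoplus_i\O(-a_i)$; hence $\L := (s'_{v-u+1},\dots,s'_\rho)$ is a free direct summand of $\ker f$ isomorphic to $\bigoplus_{i>v-u}\O(-a_i)$, with complement isomorphic to $\bigoplus_{i\le v-u}\O(-a_i)$. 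Set $\F' := \F/\L$ with $g$ the projection; since $\L\subseteq\ker f$, the map $f$ factors as $h\circ g$ for an induced reduction $h:\F'\to\F''$ with $\ker h = \ker f/\L\cong\bigoplus_{i\le v-u}\O(-a_i)$, the summand of $\ker f$ involving the $v-u$ smallest of the integers $a_i$. It remains to check $\F'$ is $(T_n)$ of well-defined rank $v$: along $\D_n$ the module $\F_p$ is free and the generators of $\L_p$ form part of a basis (by basicness), so $\F'_p$ is free of rank $v$; at the associated points of $X$ this also forces $\L\hookrightarrow\F$ to be injective and gives $\F'$ well-defined rank $v$; and for $p\notin\D_n$ the depth lemma applied to $0\to\L_p\to\F_p\to\F'_p\to0$, using $(T_n)$ for $\F$ and the freeness of $\L_p$, yields $\depth\F'_p\ge\min(n,\depth\O_p)$.
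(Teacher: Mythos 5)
Your proposal is correct and follows essentially the same route as the paper: Auslander--Buchsbaum gives local freeness of $\F$ on $\D_n$ and of $\F''$ on $\D_m$, hypotheses (1)--(3) translate into the $\min(\rho,\,n+\tau-\depth\O_p)$-basicness hypothesis of \Cref{BasicDepth} for the tautological sections of $\ker f$, and the resulting basic sections span the summand $\L$ with $\F' = \F/\L$. The only cosmetic divergence is that you read off $\ker h = \ker f/\L$ directly, whereas the paper packages this as a snake-lemma computation; both are fine.
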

\begin{proof}
Since both $\ker f$ and $\F''$ have well-defined rank, so does $\F$. 
Let $r$ denote the rank of $\F$ and let $s_1,\dots, s_{r-u}$ denote the twisted sections of $\F$ corresponding to $\ker f \to \F$.
Assumption (2) implies that $\F_p$ is free over $\O_p$ for all $p\in \D_n$ as in the proof of \Cref{Reduction2}. 
It follows for the same reason that $\F''_p$ is free over $\O_p$ for all $p\in \D_m$. 
We conclude that $(s_1,\dots, s_{r-u})$ is basic in $\F$ at all $p\in \D_m$.

Assumption (3) implies that $(s_1,\dots, s_{r-u})$ is $(n+r-v-\depth \O_p)$-basic in $\F$ at all $p\in \D_n-\D_m$.
Combining with the above, we see that 
\[
(s_1,\dots, s_{r-u})\text{ is }\min(r-u, n+(r-v)-\depth \O_p)\text{-basic in }\F\quad \forall p\in \D_n.
\]
Since $r = u+\rank \ker f \ge v$, by \Cref{BasicDepth} there are $(r-v)$ twisted sections $s_{v-u+1}',\dots, s_{r-u}'$ of $\F$ such that $(s_{v-u+1}',\dots, s_{r-u}')$ is basic in $\F$ at all $p\in \D_n$.
Moreover, these twisted sections can be chosen to correspond to a direct summand $\L$ of $\ker f$ involving the largest $(r-v)$ degrees.
The snake lemma applied to the commutative diagram of exact sequences
\[
\begin{tikzcd}
0 \arrow[r]& \L \arrow[r]\arrow[d]& \F \arrow[r,"g"]\arrow[d,equal]& \F' \arrow[r]\arrow[d,"h"]& 0\\
0 \arrow[r]& \ker f \arrow[r] & \F \arrow[r,"f"]& \F''\arrow[r] & 0
\end{tikzcd}
\] 
implies that $\ker h$ is a direct summand of $\ker f$ involving the smallest $(v-u)$ degrees.
The same argument in the proof of \Cref{Reduction2} shows that $\F'$ is $(T_n)$ of well-defined rank $v$.
\end{proof}

\section{Geometric Applications}\label{Application}

In this section, we explore some geometric consequences of \Cref{Factor}.

%

\subsection*{Points in $\P^2_k$}
\begin{definition}
A zero dimensional subscheme $Z$ of $\P^2_k$ satisfies the \emph{Cayley-Bacharach property} $(C\!B_l)$ if the following holds:
for any colength one subscheme $Z'\subset Z$, any effective divisor in $|\O(l)|$ that contains $Z'$ must also contain $Z$.
\end{definition}

\begin{proposition}
With notations as above, if $Z$ satisfies $(C\!B_l)$ then $Z$ satisfies $(C\!B_{l-1})$.
\end{proposition}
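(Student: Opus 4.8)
The plan is to prove the contrapositive-free direct implication by showing that any divisor $D \in |\O(l-1)|$ containing a colength one subscheme $Z' \subset Z$ must contain all of $Z$, using the $(C\!B_l)$ property after adding a general line. Concretely, fix a colength one subscheme $Z' \subset Z$ and an effective divisor $D \in |\O(l-1)|$ with $Z' \subseteq D$. Let $q$ be the point of $Z$ ``cut out'' by the inclusion $Z' \subset Z$ (the support of the colength one quotient). The key idea is to choose a line $M \in |\O(1)|$ that passes through no point of the support of $Z$; then $D + M \in |\O(l)|$ is an effective divisor containing $Z'$, so by $(C\!B_l)$ it contains $Z$. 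Since $M$ avoids the support of $Z$ entirely, $M$ contributes nothing to containing $Z$ locally at any point, so $D$ itself must already contain $Z$.

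The first step I would carry out is to make precise the local nature of ``containment'' of a zero-dimensional scheme $Z$ in a divisor: at each point $p$ of the (finite) support of $Z$, the condition $Z \subseteq D$ is equivalent to the local defining equation of $D$ lying in the ideal $\I_{Z,p} \subseteq \O_{\P^2,p}$. Thus $D + M \supseteq Z$ means, at each $p \in \op{Supp}(Z)$, that (equation of $D$) $\cdot$ (equation of $M$) $\in \I_{Z,p}$; and if $M$ does not pass through $p$, its local equation is a unit, so this reduces to (equation of $D$) $\in \I_{Z,p}$. Ranging over all $p \in \op{Supp}(Z)$ gives $Z \subseteq D$.

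The second step is to produce the line $M$: the support of $Z$ is a finite set of closed points of $\P^2_k$, and lines through a fixed point form a proper linear subspace of $|\O(1)| = H^0(\P^2_k,\O(1))$, so by \Cref{Avoidance} (a nonzero vector space over an infinite field is not a finite union of proper subspaces) there is a line $M$ avoiding all of $\op{Supp}(Z)$. Here one should note $k$ is infinite, which is part of the standing hypotheses; if $k$ were finite one could instead pass to a finite extension, since the Cayley-Bacharach conditions and the conclusion are insensitive to such base change, but invoking \Cref{Avoidance} is cleanest. The remaining bookkeeping — that $Z' \subseteq D$ implies $Z' \subseteq D + M$, and that a colength one subscheme of $Z$ remains colength one — is immediate.

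I do not expect a serious obstacle here; the only subtlety is being careful that the colength one subscheme $Z' \subset Z$ appearing in $(C\!B_{l-1})$ is genuinely handled by the $(C\!B_l)$ hypothesis applied to the \emph{same} $Z'$, which it is, since the definition quantifies over all colength one subschemes. One should also confirm that the degrees match: if $D$ has degree $l-1$ and $M$ has degree $1$ then $D+M$ has degree $l$, so $D + M \in |\O(l)|$ as required. With these checks the argument is complete.
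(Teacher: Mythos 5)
Your proof is correct and takes essentially the same approach as the paper's: choose a line (hyperplane) avoiding the support of $Z$, multiply the degree-$(l-1)$ divisor by it to land in $|\O(l)|$, apply $(C\!B_l)$, and conclude by observing the line's local equation is a unit at every point of $Z$. The paper's proof is just a terser version of your argument; your added remarks on the local nature of containment and the use of \Cref{Avoidance} to produce the line are correct fillings of details the paper leaves implicit.
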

\begin{proof}
Let $Z'\subset Z$ be a subscheme of colength one. 
Suppose $F\in H^0(\O(l-1))$ gives a divisor $V(F)$ that contains $Z'$. 
Let $L \in H^0(\O(1))$ cut out a hyperplane avoiding $Z$. 
It follows that the hypersurface $V(F\cdot L)$ contains $Z'$ and thus contains $Z$.
By the choice of $L$, we conclude that $Z$ is contained in $V(F)$. 
\end{proof}

\begin{definition}
Let $Z$ be a zero dimensional subscheme of $\P^2_k$. 
The \emph{Cayley-Bacharach index} of $Z$, denoted by $C\!B(Z)$, is the largest integer $l$ such that $Z$ satisfies $(C\!B_l)$. 
\end{definition}

The following theorem of Griffith-Harris \cite{GH} characterizes when a set of reduced points satisfies the Cayley-Bacharach property.
The non-reduced case is proven by Catanese \cite{Catanese}.

\begin{theorem*}[Griffith-Harris-Catanese]
Let $Z$ be a zero dimensional subscheme of $\P^2_k$. 
There is an extension of the form 
\[
0 \to \O(-l) \to \E \to \I_Z \to 0
\]
where $\E$ is a rank 2 bundle if and only if $Z$ is a local complete intersection satisfying $(C\!B_{l-3})$. 
\end{theorem*}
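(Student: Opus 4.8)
The plan is to prove the two implications by separate methods: the vector bundle $\E$ will be produced using \Cref{Factor}, while the precise range of admissible $l$ — equivalently, the Cayley--Bacharach condition — will be extracted from a cohomology computation. The local complete intersection condition is the easy part of ``only if'': if an extension $0\to\O(-l)\to\E\to\I_Z\to0$ with $\E$ locally free of rank two exists, then localizing $\E\to\I_Z$ gives $\mu(\I_{Z,p})\le2$ for every $p$, and when $p\in Z$ the ideal $\I_{Z,p}$ has height two in the regular local ring $\O_p$ of dimension two, so two of its generators form a regular sequence; hence $Z$ is a local complete intersection.

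For the existence of $\E$, suppose $Z$ is a local complete intersection. Since a zero-dimensional subscheme of the smooth surface $\P^2_k$ is arithmetically Cohen--Macaulay of codimension two, the homogeneous ideal of $Z$ admits a Hilbert--Burch resolution, which sheafifies to $0\to\bigoplus_j\O(-b_j)\to\bigoplus_i\O(-a_i)\to\I_Z\to0$. Adjoining to the middle term a redundant free summand $\O(-l)$ mapping to $0$ gives a reduction $f\colon\F\to\I_Z$ with $\ker f=\bigl(\bigoplus_j\O(-b_j)\bigr)\oplus\O(-l)$; provided $l\le\min_j b_j$, the summand $\O(-l)$ has the smallest degree occurring in $\ker f$. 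I would then apply \Cref{Factor} with $m=0$, $n=2$, $u=1$, $v=2$: hypotheses (1) and (2) are immediate because $\F$ is locally free of rank $\ge2$ and $\I_Z$ has well-defined rank $1$, and hypothesis (3), namely $\mu(\I_{Z,p})\le\depth\O_p$ for $p\in\D_2\setminus\D_0$, is exactly the statement that $Z$ is a local complete intersection (it is trivial at generic points of curves, and at closed points it says $\mu(\I_{Z,p})\le2$). The conclusion is a reduction $\F\to\F'$ with $\F'$ of type $(T_2)$ and well-defined rank two, and with factor map $h\colon\F'\to\I_Z$ whose kernel is the smallest-degree summand of $\ker f$, i.e.\ $\O(-l)$. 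On the regular surface $\P^2_k$ the condition $(T_2)$ coincides with $(S_2)$, which forces a sheaf of well-defined rank to be locally free; so $\F'=:\E$ is a rank-two vector bundle and $0\to\O(-l)\to\E\to\I_Z\to0$ is the desired extension.

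It remains to identify exactly which $l$ occur, and this is where $(C\!B_{l-3})$ enters. Because $Z$ is a local complete intersection, hence Gorenstein, $\sExt^1(\I_Z,\O(-l))$ is an invertible $\O_Z$-module, and the middle term of an extension with class $\xi\in\Ext^1(\I_Z,\O(-l))$ is locally free if and only if $\xi$ generates this sheaf at every point of $Z$. The local-to-global spectral sequence together with $H^1(\P^2,\O(-l))=0$ identifies $\Ext^1(\I_Z,\O(-l))$ with the kernel of $H^0\bigl(\sExt^2(\O_Z,\O(-l))\bigr)\to H^2(\O(-l))$, and Serre duality on $\P^2$ and on $Z$ rewrites this kernel as $(\coker\rho)^{\vee}$, where $\rho\colon H^0(\O(l-3))\to H^0(\O_Z(l-3))$ is the restriction map. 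Under this identification ``$\xi$ generates $\sExt^1$ at $p$'' says that $\xi$ does not annihilate the socle subspace $K_p\subset H^0(\O_Z(l-3))$, while $(C\!B_{l-3})$ says that $K_p\cap\im\rho=0$ for every $p\in Z$. Hence a locally free $\E$ exists precisely when some functional vanishing on $\im\rho$ avoids the finitely many proper subspaces $\{\xi:\xi|_{K_p}=0\}$; by \Cref{Avoidance} this happens exactly when $(C\!B_{l-3})$ holds (and then $\im\rho\ne H^0(\O_Z(l-3))$, so these functionals form a nonzero space). This supplies the remaining half of ``only if'', and, compared with the construction above, shows that for a local complete intersection $Z$ the admissible $l$ are exactly those at most $\min_j b_j$; in particular $C\!B(Z)=\min_j b_j-3$, which is the promised comparison of the Cayley--Bacharach index with a second Betti number.

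The main obstacle is the dictionary in the third paragraph: identifying $\Ext^1(\I_Z,\O(-l))$ with $(\coker\rho)^{\vee}$ compatibly with the local-freeness criterion, and matching the socle subspaces $K_p$ with the colength-one subschemes of $Z$ so that the Cayley--Bacharach property becomes the avoidance condition $K_p\cap\im\rho=0$. The Hilbert--Burch bookkeeping, the verification of the hypotheses of \Cref{Factor}, and the passage from $(S_2)$ to local freeness on a surface are routine by comparison. (Thematically, the same characterization should also be readable off \Cref{Factor} directly, since the modifications made there to the syzygy sections are by forms of degree $b_j-l$ and their availability is governed by \Cref{Avoidance} in exactly this manner.)
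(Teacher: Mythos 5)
The paper does not prove this theorem; it is quoted from Griffiths--Harris \cite{GH} and Catanese \cite{Catanese} and used as a black box, so there is no ``paper's own proof'' against which to compare. Judged on its own terms, your sketch is a correct outline of the standard Serre-correspondence argument. The first paragraph correctly extracts the local-complete-intersection condition from the surjection $\E\to\I_Z$. The third paragraph has all the right pieces: $\sHom(\I_Z,\O)=\O$ kills the $H^1$ and $H^2$ terms of the local-to-global spectral sequence, $\sExt^1(\I_Z,\O(-l))\cong\sExt^2(\O_Z,\O(-l))\cong\omega_Z(3-l)$ is invertible on $Z$ because $Z$ is Gorenstein, Serre duality on $\P^2$ and on $Z$ give $\Ext^1(\I_Z,\O(-l))\cong H^1(\I_Z(l-3))^\vee\cong(\coker\rho)^\vee$, and the socle/Cayley--Bacharach dictionary you describe is correct: a functional on $H^0(\O_Z(l-3))$ restricts nontrivially to $\mathrm{soc}\,\O_{Z,p}$ exactly when its image in the fiber $(\omega_Z(3-l))_{(p)}$ is nonzero, and \Cref{Avoidance} then finishes the equivalence. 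You are right that the remaining work is in making these identifications precise, but the plan is sound.

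A few points of criticism. The second paragraph, producing $\E$ for $l\le\min_j b_j$ via \Cref{Factor}, is a correct and in-spirit identical to the paper's own use of \Cref{Factor} with $m=u=1$, $n=v=2$ in the subsequent theorem bounding $C\!B(Z)$; but in your proof it is redundant, since the third paragraph already proves existence of $\E$ for every $l$ with $(C\!B_{l-3})$. More seriously, the parenthetical assertion ``the admissible $l$ are exactly those at most $\min_j b_j$; in particular $C\!B(Z)=\min_j b_j-3$'' is not established by anything you wrote: \Cref{Factor} gives existence of $\E$ for $l\le\min_j b_j$, i.e.\ the inequality $C\!B(Z)\ge\min_j b_j-3$, but you give no argument for the reverse inequality, and the paper's own theorem proves only the two-sided estimate $\min_j b_j-3\le C\!B(Z)\le\max_j b_j-3$ (with the upper bound obtained by a separate lifting argument). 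You should either drop the claim of equality or supply the missing upper-bound argument.
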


\begin{theorem}
Let $Z$ be a zero dimensional local complete intersection in $\P^2_k$. 
Let $S$ be the polynomial ring of $\P^2_k$ and let 
\begin{align*}
0 \to \bigoplus_{i = 1}^t S(-a_i) \to \bigoplus_{i = 1}^{t+1} S(-b_i) \to I_Z \to 0 \tag{$*$}\label{Res}
\end{align*}
 be a minimal graded free $S$-resolution of the homogeneous ideal $I_Z$. 
Suppose $a_1\le \dots \le a_t$, then $a_1-3\le C\!B(Z)\le a_t-3$. 
\end{theorem}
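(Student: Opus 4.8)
The plan is to connect the Cayley-Bacharach index $C\!B(Z)$ to the degrees appearing in a rank $2$ bundle sitting in an extension $0 \to \O(-l) \to \E \to \I_Z \to 0$, and then to connect such extensions to the minimal resolution $(\ref{Res})$ via the Factorization theorem (\Cref{Factor}). By the Griffith-Harris-Catanese theorem, $C\!B(Z) = l-3$ where $l$ is the \emph{largest} integer for which a rank $2$ bundle $\E$ fits into an extension $0 \to \O(-l) \to \E \to \I_Z \to 0$; equivalently, $C\!B(Z)+3$ is the top degree $l$ realizing such a sequence. So the theorem amounts to showing that this optimal $l$ satisfies $a_1 \le l \le a_t$.

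First I would establish the upper bound $l \le a_t$, equivalently $C\!B(Z) \le a_t-3$. Here I would argue directly from the minimal resolution. Given an extension $0 \to \O(-l) \to \E \to \I_Z \to 0$ with $\E$ a rank $2$ bundle, sheafify and compare with $(\ref{Res})$: the map $\O(-l) \to \E$ is a section of $\E$ vanishing on $Z$, hence corresponds, after twisting by $\det \E^\vee$, to an element of $H^0(\I_Z(l-c))$ where $c = \deg \det \E$; the surjection $\bigoplus S(-b_i) \to I_Z$ then forces the syzygy module (generated in degrees $a_1,\dots,a_t$) to detect $l$. More cleanly: one extracts from $\E$ and the resolution a relation showing $\O(-l)$ appears as (a summand related to) the kernel of a reduction of $\I_Z$ — since $\ker$ of the sheafification of $(\ref{Res})$ has its degrees among the $a_i$, and a rank-$2$ bundle quotient corresponds to dropping all but one section, maximality of $l$ is bounded by the largest available degree $a_t$. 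I expect this direction to be routine bookkeeping with Serre duality / the mapping cone.

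The heart of the argument, and the main obstacle, is the lower bound $l \ge a_1$, i.e. $C\!B(Z) \ge a_1 - 3$: one must \emph{produce} a rank $2$ bundle $\E$ in an extension with the shift $\O(-a_1)$. This is where I would invoke \Cref{Factor}. Take $\F''$ to be a twist of $\I_Z$ (or its sheafification), which is $(T_1)$ of well-defined rank $1$ since $Z$ is a local complete intersection of codimension $2$ in the Cohen-Macaulay (in fact smooth) surface $\P^2_k$; take $\F = \O(-a_1) \oplus \I_Z$ or more precisely arrange a reduction $f : \F \to \F''$ whose kernel $\bigoplus \O(-a_i)$ comes from the second syzygies in $(\ref{Res})$, so $\ker f$ has rank $t$ and the summand of smallest degree is $\O(-a_1)$. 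One checks hypotheses (1)–(3) of \Cref{Factor} with $m = 1$, $u = 1$, $n = 2$, $v = 2$: hypothesis (2) is the finiteness of projective dimension over $\P^2_k$ together with $\F$ being $(T_2) = (S_2)$, hypothesis (3) reduces to a numerical inequality $\mu(\I_{Z,p}) \le 2 - 2 + \depth \O_p = 1$ at the single point where depth jumps (vacuous since $Z$ has codimension $2$ and $\D_2 - \D_1$ is empty on a smooth surface, so this is automatic), and hypothesis (1) is $u + \rank \ker f = 1 + t \ge 2 = v$. Then \Cref{Factor} yields a reduction $g : \F \to \F'$ with $\F'$ of well-defined rank $2$ and $\ker h$ a summand of $\ker f$ of the smallest $(v-u) = 1$ degree, namely $\O(-a_1)$. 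Unwinding, $\F'$ is the desired rank $2$ bundle $\E$ (reflexive of rank $2$ on a smooth surface, hence locally free), fitting into $0 \to \O(-a_1) \to \E \to \I_Z \to 0$ after the appropriate twist, and Griffith-Harris-Catanese gives $C\!B(Z) \ge a_1 - 3$. The delicate points will be matching the degree conventions (the shift by $3 = \dim \P^2 + 1$ coming from $\omega_{\P^2} = \O(-3)$ in the Serre-duality presentation of the extension class) and verifying that the $\F$ produced is genuinely a reduction of $\I_Z$ with kernel supported in the syzygy degrees $a_i$ — i.e., that $(\ref{Res})$ can be read as exhibiting $I_Z$ as a reduction of a free module in Bruns's sense.
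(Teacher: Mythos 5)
Your lower bound argument ($a_1 - 3 \le C\!B(Z)$) is essentially the paper's: sheafify the minimal resolution to get a reduction $f\colon\bigoplus\O(-b_i)\to\I_Z$ with $\ker f = \bigoplus\O(-a_i)$, apply \Cref{Factor} with $m=u=1$, $n=v=2$, and read off a rank $2$ bundle $\E$ with $\ker(\E\to\I_Z)\cong\O(-a_1)$; then invoke Griffith--Harris--Catanese. Two small slips in your checking of the hypotheses: you first write $\F = \O(-a_1)\oplus\I_Z$ (it should be $\F=\bigoplus\O(-b_i)$, which you then correct), and your verification of hypothesis (3) is garbled — $\D_2-\D_1$ on $\P^2_k$ is the set of \emph{closed} points, not empty, and the bound there is $\mu((\I_Z)_p)\le 2-2+\depth\O_p = 2$, which holds precisely because $Z$ is a local complete intersection. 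The conclusion is right but the reasoning written down is not.

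The upper bound $C\!B(Z)\le a_t-3$ is a genuine gap. You describe it as ``routine bookkeeping'' and offer two sketches, but neither is an argument. The first (twist by $\det\E^\vee$, the syzygy module ``detects'' $l$) is too vague to assess. The second — ``a rank-$2$ bundle quotient corresponds to dropping all but one section, so maximality of $l$ is bounded by the largest available degree $a_t$'' — is a non sequitur: \Cref{Factor} is an \emph{existence} statement that produces a particular extension with $l=a_1$; it places no constraint on \emph{all} $l$ for which an extension exists, and nothing you have said shows that every valid $l$ must appear among the $a_i$. What is actually needed is the following. Let $l = C\!B(Z)+3$, giving $0\to\O(-l)\to\E\to\I_Z\to 0$ with $\E$ a rank $2$ bundle. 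Since $\Ext^1(\O(-b_i),\O(-l))=0$, the surjection $f$ lifts to $\tilde f\colon\bigoplus\O(-b_i)\to\E$, and combining with $\O(-l)\hookrightarrow\E$ gives a surjection $\O(-l)\oplus\bigoplus\O(-b_i)\to\E$ whose kernel is $\bigoplus\O(-a_i)$, yielding
\[
0\to\bigoplus_{i=1}^t\O(-a_i)\xrightarrow{\ \varphi\ }\O(-l)\oplus\bigoplus_{i=1}^{t+1}\O(-b_i)\to\E\to 0,
\]
where the component of $\varphi$ into $\bigoplus\O(-b_i)$ is the original syzygy map of $(\ref{Res})$. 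If $l>a_t$ then the component of $\varphi$ into $\O(-l)$ vanishes (there are no nonzero maps $\O(-a_i)\to\O(-l)$), so $\varphi$ coincides with the syzygy map of $(\ref{Res})$, which drops rank along $Z$; this contradicts the fact that $\coker\varphi=\E$ is locally free. That contradiction is the missing step.
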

\begin{proof}
We sheafify (\ref{Res}) to obtain a reduction $f:\bigoplus_{i = 1}^{t+1}\O(-b_i) \to \I_Z$. 
Since $Z$ is a local complete intersection, we have $\mu((\I_Z)_p) \le 2$ for all $p\in \P^2_k$. 
Since $\I_Z$ is a non-zero ideal sheaf, it is $(T_1)$ and has rank 1.
Taking $m = u = 1$ and $n =v = 2$, we deduce from \Cref{Factor} that there is an extension of the form 
\[
0\to \O(-a_1) \to \E \to \I_Z \to 0
\]
where $\E$ is $(T_2)$ of rank $2$. 
Since $X$ is regular of dimension $2$, $\E$ is a rank 2 bundle. 
It follows from the theorem above that $C\!B(Z) \ge a_1-3$.

Conversely, suppose $Z$ satisfies $(C\!B_{l-3})$, then there is a rank 2 bundle $\E$ and an extension
\[
0 \to \O(-l) \to \E \to \I_Z \to 0.
\]
Since $\Ext^1(\O(a),\O(b)) = 0$ for all $a,b\in \mathbb{Z}$, the surjection $f$ lifts to a map $\bigoplus_{i = 1}^{t+1}\O(-b_i)\to \E$ and we obtain an exact sequence
\[
0 \to \bigoplus_{i = 1}^t \O(-a_i) \xrightarrow{\varphi} \O(-l)\oplus \bigoplus_{i = 1}^{t+1} \O(-b_i) \to \E \to 0.
\]
If $l>a_t$ then the map $\bigoplus_{i = 1}^t \O(-a_i) \to \O(-l)$ would be zero, and $\varphi$ would drop rank on $Z$.
This is a contradiction to the fact that $\varphi$ drops rank nowhere as $\E$ is a bundle. 
\end{proof}

In particular, the Cayley-Bacharach index of an $(a,b)$-complete intersection in $\P^2_k$ is exactly $a+b-3$. 
Let $Z$ be a zero dimensional local complete intersection in $\P^2_k$ of degree $8$. 
Then $Z$ lies on at least two linearly independent cubic curves $C_1$ and $C_2$.
If $Z$ does not lie on any conic, then $C_1$ and $C_2$ cut out a complete intersection $K$. 
It follows that $Z$ is residual to one point $p$ in $K$.
Since $K$ satisfies $(C\!B_3)$, it follows that every cubic containing $Z$ contains the 
residual point $p$ as well.
This is the classical Cayley-Bacharach theorem.

\subsection*{Curves in $\P^3_k$}

Recall the Lazarsfeld-Rao procedure of producing a curve.

\begin{theorem*}[Lazarsfeld-Rao {\cite[Lemma 1.1]{Rao}}]
Let $C$ be a pure codimension two curve in $\P^3_k$.
The index of specialty of $C$ is defined to be $e(C) := \sup\{l\mid H^1(\O_C(l))\ne 0\}$.
There is an extension of the form
\[
0 \to \bigoplus_{i = 1}^t \O(-a_i)\to \E \to \I_C \to 0,
\]
where $a_1\le \dots \le a_t \le e(C)+4$, and $\E$ is a rank $(t+1)$ bundle with $H^2_*(\E) = 0$.
\end{theorem*}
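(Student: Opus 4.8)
The plan is to apply the Factorization Theorem (\Cref{Factor}) to a sheafified minimal free resolution of $\I_C$, in direct analogy with the proof of the points-in-$\P^2_k$ theorem above, but now tracking cohomology via the index of specialty. Let $S$ be the homogeneous coordinate ring of $\P^3_k$ and take a minimal graded free resolution
\[
0 \to F_2 \to F_1 \to \bigoplus_{j=1}^{s} S(-c_j) \to I_C \to 0.
\]
Sheafifying and splitting off the first syzygy module appropriately, one obtains a reduction $f:\F \to \I_C$ where $\ker f = \bigoplus_j \O(-c_j)$ and $\F$ is (a twist of a direct sum of line bundles plus) a sheaf whose cohomology we understand; more precisely, I would arrange $\F$ so that $H^2_*(\F) = 0$, which holds automatically for any second-syzygy sheaf of $\I_C$ over the regular scheme $\P^3_k$ since such a sheaf has no intermediate cohomology below degree $2$ after enough truncation — one should instead build $\F$ directly as a suitable extension so that $H^2_*(\F)=0$ is manifest. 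The target $\I_C$ is $(T_1)$ of well-defined rank $1$ (it is a nonzero ideal sheaf), and since $C$ is a local complete intersection of pure codimension two, $\mu((\I_C)_p)\le 2$ for all $p$. We take $m = u = 1$ and $n = v = 2$ in \Cref{Factor}: condition (1) is the $(T_1)$ rank-one statement plus $u + \rank\ker f \ge 2$; condition (2) asks that $\F$ be $(T_2)$ with $\F_p$ of finite projective dimension over $\O_p$ for $p\in\D_2$, which holds since $\P^3_k$ is regular; condition (3) is the pointwise bound $\mu((\I_C)_p)\le 2 - 2 + \depth\O_p = \depth\O_p$ on $\D_2 - \D_1$, i.e. at points of depth $2$, where $\mu\le 2$ suffices. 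The conclusion is that $f$ factors through a reduction $g:\F\to\E$ with $\E$ of well-defined rank $2$, $(T_2)$, hence (regularity again) locally free of rank $2$, and $\ker(\F\to\E)$ a summand of $\bigoplus_j\O(-c_j)$ involving the smallest degrees, so that $\ker(\E\to\I_C) = \bigoplus_{i=1}^t \O(-a_i)$ with the $a_i$ among the largest $c_j$.

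The remaining work is the bound $a_i \le e(C)+4$ and the vanishing $H^2_*(\E) = 0$. For the cohomology vanishing: from $0\to \bigoplus_i\O(-a_i)\to \E\to \I_C\to 0$ the long exact sequence gives $H^2_*(\E)$ sandwiched between $H^2_*(\bigoplus\O(-a_i)) = 0$ and $H^2_*(\I_C)$, so it suffices that $H^2_*(\I_C) = 0$; but $H^2(\I_C(l))$ surjects onto... rather, from $0\to\I_C\to\O\to\O_C\to 0$ we get $H^1(\O_C(l))\to H^2(\I_C(l))\to H^2(\O(l)) = 0$ for all $l$, so $H^2_*(\I_C)$ is a quotient of $H^1_*(\O_C)$, which is nonzero only in finitely many degrees — this shows $H^2_*(\I_C)$ is finite length, not zero. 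To get genuine vanishing I must instead choose the reduction $\F$ (equivalently the resolution) so that $H^2_*(\F) = 0$ already, e.g. by taking $\F$ to be the second syzygy sheaf of $\I_C$, which has $H^i_*(\F) = 0$ for $i = 1, 2$ by construction over a regular scheme, and then the factorization lands in an $\E$ with $H^2_*(\E)$ trapped between $0$ and $0$.

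For the degree bound I would argue as in the converse direction of the $\P^2_k$ proof: any $a_i$ with $a_i > e(C)+4$ forces a contradiction with the fact that $\E$ is a bundle. Concretely, the extension class of $0\to\O(-a_i)\to\E\to(\text{quotient})\to 0$ lives in an $\Ext^1$ group that one identifies with a piece of $H^2_*$ or $H^1_*(\I_C) $; by Serre duality on $\P^3_k$, $H^1(\I_C(l))^\vee \cong H^2(\omega_C \otimes \O(-l))$-type terms vanish once $l$ exceeds $e(C)+4$, so if some $a_i$ were that large the corresponding sub-line-bundle would split off as a direct summand of $\E$, making the composite $\O(-a_i)\to\E\to\I_C$ vanish on $C$ and hence $\varphi$ drop rank along $C$ — impossible for a bundle. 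The main obstacle is precisely this last step: pinning down the correct $\Ext$/cohomology identification that converts "$a_i > e(C)+4$" into "the extension splits," which requires carefully computing $H^1$ and $H^2$ of the relevant twisted ideal and syzygy sheaves against the definition $e(C) = \sup\{l : H^1(\O_C(l))\ne 0\}$ and invoking Serre duality with $\omega_{\P^3} = \O(-4)$; everything else is a routine translation of the points-in-$\P^2_k$ argument.
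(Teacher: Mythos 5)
The paper does not prove this statement; it quotes it from Lazarsfeld--Rao \cite[Lemma 1.1]{Rao} and uses it as a black box, so there is no internal argument to compare against. Judged on its own terms, your attempt does not succeed. Midway through you write ``since $C$ is a local complete intersection of pure codimension two, $\mu((\I_C)_p)\le 2$ for all $p$''---but the statement assumes only that $C$ has pure codimension two; without the local complete intersection hypothesis condition~(3) of \Cref{Factor} fails for $n=v=2$, and even granting it the output would be a rank-$2$ sheaf $\E$ with $t=1$, strictly weaker than the general rank-$(t+1)$ statement. More seriously, the degree bound $a_i\le e(C)+4$ cannot come from \Cref{Factor}: that theorem only locates the $a_i$ among the smallest generator degrees of $I_C$, which are governed by the Castelnuovo--Mumford regularity of $\I_C$; the Rao module $H^1_*(\I_C)$ contributes to regularity but not to $e(C)$, so generator degrees can exceed $e(C)+4$ by an arbitrary amount. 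Finally, $H^2_*(\E)=0$ is the heart of the lemma and is not a byproduct of the factorization machinery: even starting from a second-syzygy sheaf $\F$ with $H^2_*(\F)=0$, the sequence $0\to\L\to\F\to\E\to 0$ only gives $H^2_*(\E)\hookrightarrow H^3_*(\L)$, and $H^3_*$ of a sum of line bundles on $\P^3_k$ is not zero.

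The actual proof in \cite{Rao} runs in the opposite direction. One does not resolve $I_C$ by its generators; rather one builds $\E$ directly as an iterated extension $0\to\bigoplus\O(-a_i)\to\E\to\I_C\to 0$ whose connecting map $H^2_*(\I_C)\to H^3_*(\bigoplus\O(-a_i))$ is injective, which forces $H^2_*(\E)=0$. Serre duality furnishes a surjection $\Ext^1(\I_C,\O(-a))\twoheadrightarrow H^2(\I_C(a-4))^{\vee}$, supplying the extension classes needed to kill the finite-length module $H^2_*(\I_C)\cong H^1_*(\O_C)$, and the bound $a_i\le e(C)+4$ is precisely the observation that $H^2(\I_C(a-4))=H^1(\O_C(a-4))$ vanishes once $a-4>e(C)$, so larger $a_i$ contribute nothing. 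Local freeness of $\E$ along $C$ is then arranged by choosing the extension classes generically, in the style of Hartshorne--Serre. This cohomology-killing mechanism is disjoint from the basic-element and factorization theorems of the present paper, which is presumably why the author cites the lemma rather than reproving it.
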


We show that the above procedure factors through a Hartshorne-Serre correspondence (see \cite{Hartshorne}) if and only if the curve is a generic complete intersection.

\begin{theorem}
With notations as above, the reduction $f:\E \to \I_C$ factors through a reduction $g:\E \to \F$ where $\F$ is a rank 2 reflexive sheaf if and only if $C$ is a generic complete intersection.
In this case, we may choose $g$ such that the factor map $h:\F \to \I_C$ has $\ker h \cong \O(-a_1)$, i.e. $C$ is the vanishing scheme of a section of $\F$ in degree $a_1$.
\end{theorem}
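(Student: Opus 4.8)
The plan is to run this through \Cref{Factor} exactly as in the points-in-$\P^2$ theorem, but now targeting rank $2$ and depth condition $(T_2)$ on a $3$-dimensional ambient. Set $X = \P^3_k$, which is regular of dimension $3$. The sheaf $\I_C$ is a nonzero ideal sheaf of a pure-codimension-two curve, hence has well-defined rank $1$ and satisfies $(T_1)$ (it is torsion-free, and the condition $(T_1)$ only asks $\depth(\I_C)_p \ge \min(1,\depth\O_p)$, which holds for any torsion-free sheaf on a regular scheme). The reduction $f:\E \to \I_C$ from the Lazarsfeld–Rao theorem has kernel $\bigoplus_{i=1}^t \O(-a_i)$ with $a_1 \le \dots \le a_t \le e(C)+4$, so $\rank\ker f = t$ and $\rank\E = t+1$. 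I would like to factor through $g:\E \to \F$ with $\F$ of rank $2$, so I take $u = 1$, $v = 2$, $m = 1$, $n = 2$ in \Cref{Factor}. Condition (1) holds: $\I_C$ is $(T_1)$ of rank $1$ and $1 + t \ge 2$ since $t \ge 1$. Condition (2) needs $\E$ to be $(T_2)$ — true since $\E$ is a bundle — and $\E_p$ of finite projective dimension for $p\in\D_2$; this is automatic as $X = \P^3_k$ is regular. Condition (3) requires $\mu((\I_C)_p) \le 2 - 2 + \depth\O_p = \depth\O_p$ for all $p \in \D_2 \setminus \D_1$, i.e. for all $p$ with $\depth\O_p = 2$. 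Since $\P^3_k$ is regular, $\depth\O_p = \dim\O_p$, so such $p$ have codimension $2$; and $\mu((\I_C)_p) \le 2$ at a codimension-two point exactly when $\I_C$ is locally generated by $2$ elements there, i.e. when $C$ is a local complete intersection at the codimension-one points of $C$ — equivalently $C$ is a generic complete intersection. This is precisely the hypothesis, so \Cref{Factor} applies and yields a reduction $g:\E\to\F$ with $\F$ of type $(T_2)$ and well-defined rank $2$, and (since $X$ is Cohen–Macaulay, even regular) $(T_2)$ forces Serre's condition $(S_2)$; combined with $\F$ being a second syzygy (it is a quotient of a bundle by a bundle, hence locally a second syzygy, hence torsion-free and reflexive in codimension... ) I would argue $\F$ is reflexive. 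Actually the cleanest route: a rank-$2$ sheaf on a regular $3$-fold satisfying $(S_2)$ and $(F_1)$ is reflexive; $\F$ inherits $(F_1)$ because $\I_C$ is $(F_1)$ (it is locally free at codimension $\le 1$ points, as $C$ has codimension $2$) and $\ker h$ is locally free, so $\F$ is locally free in codimension $1$; then $(S_2)$ plus locally-free-in-codimension-$1$ on a Gorenstein scheme gives reflexive. The control on degrees in \Cref{Factor} ("\,$\ker h$ is a summand of $\ker f$ involving the smallest $v-u = 1$ degrees\,") gives $\ker h \cong \O(-a_1)$, which is the asserted statement that $C$ is the zero scheme of a section of $\F$ in degree $a_1$.

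For the converse, suppose $f:\E\to\I_C$ factors through a reduction $g:\E\to\F$ with $\F$ rank $2$ reflexive and factor map $h:\F\to\I_C$. By definition of reduction, $\ker h$ is a sum of line bundles, and comparing ranks it has rank $1$, so $\ker h = \O(-b)$ for some $b$, giving an exact sequence $0 \to \O(-b) \to \F \to \I_C \to 0$. Restricting over the generic point of each codimension-one point of $C$: there $\I_C$ is principal (it is the ideal of a Cartier divisor on the regular local ring $\O_{\P^3,\eta}$ of dimension $1$ — wait, the codimension-two points of $\P^3$ have $\dim\O_p = 2$), and $\F_p$ is free of rank $2$ by reflexivity on a regular local ring of dimension $2$ (reflexive $=$ free there for... no, reflexive modules over a $2$-dimensional regular local ring need not be free, but they are of the form... ). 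Let me instead argue directly: at a codimension-two point $p$, the sequence $0\to \O_p(-b)\to \F_p \to (\I_C)_p \to 0$ with $\F_p$ free of rank $2$ (reflexive over a $2$-dimensional regular local ring with $\F$ locally free in codimension $1$ is free) exhibits $(\I_C)_p$ as a quotient of a free rank-$2$ module by a free rank-$1$ module, hence $\mu((\I_C)_p) \le 2$; since this holds at every codimension-one point of $C$, the curve is a generic complete intersection. I would need the input fact that a rank-$2$ reflexive sheaf on $\P^3_k$ is locally free away from finitely many points, hence free at every codimension-two local ring; this is standard (reflexive sheaves on a smooth $3$-fold are locally free outside a finite set), and I would cite \cite{Hartshorne} for it.

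The main obstacle I expect is the passage between "$(T_2)$ of well-defined rank $2$" as delivered by \Cref{Factor} and "rank $2$ reflexive sheaf" as demanded by the statement — getting reflexivity cleanly requires knowing $\F$ is locally free in codimension $1$ (so that $(S_2)$ upgrades to reflexive on the Gorenstein, in fact regular, ambient), and for that I must track that $\I_C$ is $(F_1)$ and that the kernel $\ker h = \O(-a_1)$ is a line bundle; the snake-lemma diagram in the proof of \Cref{Factor} already records the needed compatibility of kernels, so this should go through, but it is the step that needs care. A secondary point is verifying the degree-$a_1$ claim: it is not automatic from "$\F$ reflexive" alone but comes from the "smallest $(v-u)$ degrees" clause of \Cref{Factor}, so I must invoke that refined conclusion rather than the bare existence statement.
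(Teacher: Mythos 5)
Your proof is correct and takes essentially the same route as the paper: apply the factorization theorem (Theorem~\ref{Factor}) with $m=u=1$, $n=v=2$ to produce the extension $0\to\O(-a_1)\to\F\to\I_C\to 0$ in the ``if'' direction, and use that a rank-2 reflexive sheaf on $\P^3_k$ is free at all codimension-$\le 2$ points to deduce $\mu((\I_C)_p)\le\mu(\F_p)=2$ in the ``only if'' direction. The point you flag as needing care---upgrading ``$(T_2)$ of well-defined rank $2$'' to ``rank-2 reflexive''---is indeed implicit in the paper's proof; your argument (torsion-free plus $(S_2)$ on the regular scheme $\P^3_k$ gives reflexive) settles it correctly.
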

\begin{proof}
The pure codimension two curve $C$ is a generic complete intersection if and only if $\mu((\I_C)_p) \le 2$ for all $p\in \C_2 = \D_2$. 
If there is a reduction $h:\F \to \I_C$ where $\F$ is rank 2 reflexive, then $\mu((\I_C)_p) \le \mu(\F_p) = 2$ for all $p\in \D_2$. 
The converse and the last statement follows from \Cref{Factor} with $m = u = 1$ and $n = v = 2$.
\end{proof}

\subsection*{Codimension two local complete intersections}

\begin{theorem}
Let $X$ be a smooth projective variety of dimension $d$ over an infinite field. 
If $V$ is a Cohen-Macaulay subscheme of pure codimension two such that $\mu((\I_V)_p) \le \dim \O_p$ for all $p\in X-\C_1$, then there is a rank $d$ bundle $\E$ and an extension of the form
\[
0 \to \bigoplus_{i = 1}^{d-1} \O(-a_i) \to \E \to \I_V \to 0.
\]
In other words, every such $V$ is the degeneracy locus of $d-1$ sections of a rank $d$ bundle.
\end{theorem}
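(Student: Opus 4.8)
The plan is to apply the Factorization theorem (\Cref{Factor}) to a minimal graded free resolution of $\I_V$, just as in the two preceding applications, but now iterating the construction (or applying \Cref{Factor} once with the right numerical parameters) to climb from rank $1$ up to rank $d$. First I would sheafify a minimal graded free $S$-resolution of the homogeneous ideal $I_V$; since $V$ has pure codimension two and $X$ is smooth, the syzygy modules are vector bundles away from codimension $\ge 2$, and the Auslander--Buchsbaum formula together with Cohen--Macaulayness of $V$ controls the depths, so the tail of the resolution gives a reduction $f : \F \to \I_V$ where $\F$ is a direct sum of line bundles of large enough rank (rank $\ge d$ can be arranged, possibly after adding trivial summands) and $\F$ is $(T_n)$ for $n$ as large as we need, with $\F_p$ of finite projective dimension everywhere since $X$ is regular.

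The core step is then to verify the three hypotheses of \Cref{Factor} with $m = u = 1$, $n = v = d$, $\F'' = \I_V$. Hypothesis (1): $\I_V$ is a nonzero ideal sheaf on an integral scheme, hence $(T_1)$ of well-defined rank $1$, and the rank of $\ker f$ is at least $d - 1$ (enlarge the free module in the resolution by trivial summands $\O$ if necessary), so $u + \rank\ker f \ge v$. Hypothesis (2): $\F$ is $(T_d)$ — here I would use that the relevant syzygy sheaf in a minimal resolution of a codimension two Cohen--Macaulay ideal on a regular scheme satisfies the requisite depth condition (a standard computation with the depth lemma applied to the truncated resolution), and finite projective dimension is automatic on a regular $X$. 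Hypothesis (3) is the point where the hypothesis $\mu((\I_V)_p) \le \dim\O_p$ for $p \in X - \C_1$ enters: for $p \in \D_d - \D_1 = \C_d - \C_1$ (using that $X$ is Cohen--Macaulay, so $\depth\O_p = \dim\O_p$), we need $\mu((\I_V)_p) \le v - n + \depth\O_p = \dim\O_p$, which is exactly the assumption. At points $p \in \C_1$ we have $\mu((\I_V)_p) \le 2 \le d$ provided $d \ge 2$ (and the codimension-two hypothesis forces $d \ge 2$), so $\I_V$ is basic-free at those points automatically. Then \Cref{Factor} yields a factorization $f = h \circ g$ with $g : \F \to \E$, $\E$ being $(T_d)$ of well-defined rank $d$; since $X$ is smooth of dimension $d$, a $(T_d) = (S_d)$ sheaf of rank $d$ on a $d$-dimensional regular scheme is locally free, so $\E$ is a rank $d$ bundle. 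The kernel of $h$ is a summand of $\ker f$ of rank $v - u = d - 1$, i.e. of the form $\bigoplus_{i=1}^{d-1}\O(-a_i)$, and the snake lemma gives exactly the desired extension $0 \to \bigoplus_{i=1}^{d-1}\O(-a_i) \to \E \to \I_V \to 0$.

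I expect the main obstacle to be the bookkeeping in hypothesis (2), namely checking that the syzygy sheaf $\F$ extracted from the minimal free resolution is genuinely $(T_d)$ rather than merely $(T_2)$; this requires knowing that $\I_V$, being Cohen--Macaulay of codimension two on a regular scheme, has a free resolution of length two, so that the first syzygy module is a maximal Cohen--Macaulay module (hence $(S_d)$) — this is where the Cohen--Macaulay hypothesis on $V$ is essential, not just purity. The rest is a matter of matching the numerology $m=u=1$, $n=v=d$ to the statement of \Cref{Factor} and invoking the regularity of $X$ to upgrade $(S_d)$ of full rank to local freeness; I would also remark that the case $d = 2$ recovers the Griffith--Harris--Catanese type statement and that the displayed sequence is the promised realization of $V$ as a degeneracy locus, since $\E \to \I_V$ drops rank precisely along $V$.
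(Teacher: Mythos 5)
Your application of \Cref{Factor} with parameters $m=u=1$, $n=v=d$ is correct and is exactly what the paper does at the final step, but the construction of the initial reduction $f : \F \to \I_V$ has a genuine gap, and this gap is precisely where the paper's argument differs.

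Recall that a \emph{reduction} of a sheaf, in the paper's definition, is a surjection whose kernel is a direct sum of line bundles $\bigoplus\O(-a_i)$. You propose to obtain $f$ by ``sheafifying a minimal graded free $S$-resolution of the homogeneous ideal $I_V$'' and taking the tail map. On a general smooth projective variety $X$, the section ring $S = H^0_*(\O_X)$ is not regular, its minimal free resolutions need not be finite, and — more to the point — there is no reason for the first syzygy of a surjection $\bigoplus\O(-a_i) \twoheadrightarrow \I_V$ to split as a sum of line bundles. Regularity of $X$ and Cohen--Macaulayness of $V$ do force the syzygy sheaf $\ker f$ to be a vector \emph{bundle} (Auslander--Buchsbaum, as you note), but being a sum of twists of $\O(1)$ is a much stronger property, essentially a Hilbert--Burch phenomenon that is specific to $\P^n$. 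You never verify it, and in general it is false (think of a finite set of points on a $K3$ surface, or on $\P^1\times\P^1$ with $\O(1) = \O(1,1)$). Without it, $f$ is not a reduction and \Cref{Factor} does not apply.

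The paper sidesteps this by a liaison argument. It links $V$ through a complete intersection $K = V(F,G)$ to a residual scheme $Z$ (so $\I_{Z/K} = \sHom(\O_V,\O_K)$), takes a surjection $\varphi'$ from a sum of line bundles onto $\I_Z$ containing $F,G$ among its components, and observes that $\ker\varphi'$ is a bundle. A mapping-cone comparison with the Koszul resolution of $\O_K$ produces a three-term locally-free resolution $0 \to \O(-a-b) \to \ker\varphi' \to \bigoplus\O(-a_i) \to 0$ of $\I_Z/\I_K$, which by linkage duality is a twist of $\omega_V$. Applying $\sHom(-,\omega_X)$ flips the complex and yields a resolution of $\O_V$ of the form $0 \to \bigoplus\O(a_i - a - b) \to (\ker\varphi')^*(-a-b) \to \O$. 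This is where the sum of line bundles lands in kernel position: the duality swaps the two syzygy sheaves, so the middle term of the extension is the (dual, twisted) bundle $\E = (\ker\varphi')^*(-a-b)$ and the kernel of $\E \to \I_V$ is $\bigoplus\O(a_i - a - b)$, giving a genuine reduction. Only then does the paper apply \Cref{Factor} exactly as you do. In short: your reduction step is the one that needs the most work and the most new ideas, and it cannot be done by naively sheafifying a minimal resolution on a general $X$; the linkage/duality argument is the mechanism that manufactures the required extension.
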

\begin{proof}
Since $V$ is of pure codimension two and $X$ is smooth, there are forms $F \in H^0(\I_V(a))$ and $G\in H^0(\I_V(b))$ for some $a,b > 0$ such that $V(F,G)$ is a complete intersection $K$ in $X$. 
Let $Z$ be the subscheme defined by the property that $\I_{Z/K} = \sHom(\O_V,\O_K)$, which is also Cohen-Macaulay of pure codimension two in $X$.
See \cite{PS} for a proof of this fact. 

Let $\bigoplus_{i = 1}^u \O(-a_i) \xrightarrow{\varphi} \I_Z$ be a surjection, we may replace $\varphi$ by another surjection
\[
\varphi':\O(-a)\oplus\O(-b) \oplus \bigoplus_{i = 1}^u \O(-a_i)  \xrightarrow{[F,G,\varphi]} \I_Z.
\]
Since $(\I_Z)_p$ is isomorphic to $\O_p$ for $p\not\in Z$ and is perfect of height two in $\O_p$ for $p\in Z$, it follows that $(\ker \varphi')_p$ is free for all $p\in X$. 
We conclude that $\ker \varphi'$ is a bundle on $X$. 
Let 
\[
\K_\bullet:\quad 0 \to \O(-a-b) \to \O(-a)\oplus \O(-b) \to \O
\]
be the Koszul complex of $\O_K$ and let 
\[
\F_\bullet: \quad 0\to \ker \varphi' \to \O(-a)\oplus \O(-b)\oplus \bigoplus_{i = 1}^u \O(-a_i) \to \O
\]
be the locally-free resolution of $\O_Z$ given by $\varphi'$.
The surjection $\O_K \to \O_Z$ induces a map of complexes 
\[
\begin{tikzcd}
\K_\bullet  \arrow[d,"\alpha"] &  & 0 \arrow[r] & \O(-a-b) \arrow[d] \arrow[r]& \O(-a)\oplus \O(-b)\arrow[r] \arrow[d] & \O \arrow[d]\\
\F_\bullet &  & 0 \arrow[r] & \ker \varphi' \arrow[r] & \O(-a)\oplus \O(-b) \oplus \bigoplus_{i = 1}^u \O(-a_i) \arrow[r]& \O.
\end{tikzcd}
\]
The middle vertical arrow is a split injection by our choice of $\varphi'$. 
The left vertical arrow exists by the left-exactness of taking global sections.
The mapping cone of $\alpha$ splits off the trivial summands $\O(-a)\oplus\O(-b) \xrightarrow{[1,1]} \O(-a)\oplus\O(-b)$ and $\O \xrightarrow{1} \O$ to yield a complex
\[
\G_\bullet: \quad 0 \to \O(-a-b) \to \ker \varphi' \to \bigoplus_{i = 1}^u \O(-a_i) \to 0.
\]
By the standard property of the mapping cone, the complex $\G_\bullet$ is a locally-free resolution of the quotient sheaf $\I_Z/\I_K$. 
However, we note that
\[
\I_Z/\I_K  = \sHom(\O_V,\O_K) \cong \sHom(\O_V,\omega_K)\otimes \omega_X^{-1}(-a-b) \cong \omega_V \otimes \omega_X^{-1}(-a-b).
\]
In other words, the complex $\G_\bullet \otimes \omega_X(a+b)$ is a locally-free resolution of $\omega_V$.
Since $V$ is Cohen-Macaulay of pure codimension two, we can apply the functor $\sHom(-,\omega_X)$ to the complex $\G_\bullet \otimes \omega_X(a+b)$ and obtain a locally-free resolution of $\O_V$
\[
0 \to \bigoplus_{i = 1}^u \O(a_i-a-b) \to (\ker \varphi')^*(-a-b) \to \O.
\]
This gives us an extension of the form 
\[
0 \to \bigoplus_{i = 1}^u \O(a_i-a-b) \to \E \to \I_V \to 0
\]
where $\E$ is the bundle $(\ker \varphi')^*(-a-b)$.
If $\rank \E < d$, then we may add trivial complexes of the form $0 \to \O \xrightarrow{1} \O \to 0 \to 0$. 
Suppose $\rank \E > d$.
Since $\mu((\I_V)_p)\le \dim \O_p = \depth \O_p$ for all $p\in X-\C_1$ and $\I_V$ is $(T_1)$ of rank 1, we may apply \Cref{Factor} with $m = u = 1$ and $n = v = d$ to the reduction $f:\E \to \I_V$, and obtain a factor reduction $h:\F'\to \I_V$ where $\F'$ is of rank $d$. 
\end{proof}

The condition on $V$ is satisfied when $V$ is a local complete intersection of pure codimension two in $X$ (e.g. $V$ is smooth of codimension two).
It is known to experts of liaison theory that such $V$ is the degeneracy locus of $(r-1)$ sections of a rank $r$ bundle.
Our contribution here is an upper bound on $r$ by the dimension of $X$.
This upper bound is sharp: not all smooth codimension two subschemes of a smooth projective variety $X$ of dimension $d$ are the degeneracy loci of $d-2$ sections of rank $d-1$ bundles. 
For example, every smooth curve in $\P^3_k$ is the degeneracy locus of 2 sections of a rank $3$ bundle, but a smooth curve $C$ in $\P^3_k$ that is the vanishing locus of a section of a rank 2 bundle must be subcanonical, i.e. $\omega_C\cong \O_C(l)$ for some integer $l$. 
Clearly not every smooth curve is embedded this way.

\section*{Appendix: Proofs of \Cref{BasicCodim} and \Cref{BasicDepth}}

We largely follow the treatment in \cite{EE} and \cite{Bruns} in the affine case.
We slightly improve the proofs to ``catch'' multiple basic elements at once, in order to avoid another induction that applies the existence theorems multiple times.
In the words of Artin, we are ``putting a general position argument (\Cref{FiniteBasic}) in general position".

\subsection*{Codimension Version}

Note that for any sheaf $\F$, there is a presentation $\E_1 \to \E_0 \to \F \to 0$ where $\E_1$ and $\E_0$ are locally-free of finite rank. 

\begin{definition}
Let $\E_1\xrightarrow{\varphi} \E_0$ be a map of locally-free sheaves of finite rank on $X$. 
The $i$-th minor ideal sheaf $\I_i(\varphi)$ is defined as the image ideal of the map $\wedge^i \E_1 \otimes \wedge^i \E_0^* \to \O$ corresponding to the $i$-th exterior map $\wedge^i\E_1 \to \wedge^i \E_0$. 

Let $\F$ be a sheaf and let $\E_1\xrightarrow{\varphi} \E_0 \to \F\to 0$ be a presentation of $\F$ by locally free sheaves $\E_1$ and $\E_0$ of finite rank.
We define the \emph{$i$-th Fitting ideal} $\Fitt_i(\F)$ of $\F$ to be $\I_{n-i}(\varphi)$, where $n = \rank \E_0$. 
Let $Z_i(\F)$ be the subscheme corresponding to $\Fitt_i(\F)$. 
\end{definition}

\begin{proposition}
With notations as above, the ideal sheaf $\Fitt_i(\F)$ is well-defined and does not depend on the presentation chosen for any $i$.
The subscheme $Z_i(\F)$ contains exactly points $p\in X$ where $\mu(\F_p)>i$. 
\end{proposition}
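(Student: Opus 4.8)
The plan is to reduce both assertions to the classical theory of Fitting ideals over a ring. Both the ideal sheaf $\Fitt_i(\F)$ and the locus $\{p\in X : \mu(\F_p) > i\}$ are defined by local data, so I would pass to an affine chart $X = \Spec R$ with $R$ noetherian and $\F = \tilde M$ for a finite $R$-module $M$; a presentation $\E_1 \to \E_0 \to \F \to 0$ becomes a presentation $R^m \xrightarrow{\varphi} R^n \to M \to 0$, and $\Fitt_i(M)$ is the ideal $I_{n-i}(\varphi)$ generated by the $(n-i)\times(n-i)$ minors of the matrix of $\varphi$, with the conventions $I_j(\varphi) = R$ for $j\le 0$ and $I_j(\varphi) = 0$ for $j > \min(m,n)$. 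Since localization is exact, it carries a presentation of $M$ to a presentation of $M_{\mathfrak p}$ and carries $\varphi$ to its localization; hence $\Fitt_i(M)_{\mathfrak p} = \Fitt_i(M_{\mathfrak p})$, so Fitting ideals commute with localization, a fact I would use throughout.

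For the independence of the presentation (this is Fitting's lemma, and I expect it to be the only real obstacle), I would argue that any two presentations of $M$ are connected by a chain of three elementary moves, each of which manifestly preserves $\Fitt_i$: (i) composing $\varphi$ with automorphisms of $R^m$ and $R^n$; (ii) adjoining a zero column to $\varphi$, i.e.\ adding a superfluous relation, which alters no minor; (iii) enlarging the target by a free rank-one summand and $M$ by one generator $e$, after which a change of basis of the new target puts the presentation matrix in the block form $\bigl(\begin{smallmatrix}\varphi & 0\\ 0 & -1\end{smallmatrix}\bigr)$. For (iii) one checks, by expanding along the last row and column and using the standard inclusions $I_{k+1}(\varphi)\subseteq I_k(\varphi)$, that
\[
I_{(n+1)-i}\!\left(\begin{smallmatrix}\varphi & 0\\ 0 & -1\end{smallmatrix}\right) = I_{n-i}(\varphi),
\]
so $\Fitt_i$ is unchanged because the rank of the target has also grown by one. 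Finally, given two presentations with targets $R^n$ and $R^{n'}$, repeated application of (iii) turns each into a presentation with target $R^{n+n'}$; the two resulting relation modules each surject onto $\ker(R^{n+n'}\to M)$, and a Schanuel-type argument — lift one surjection of free modules through the other and apply the resulting automorphism of the direct sum — shows that, after adjoining zero columns as in (ii), the two presentation matrices differ by an automorphism of the source. Chaining the moves gives that $\Fitt_i(\F)$, hence the subscheme $Z_i(\F)$, depends only on $\F$.

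For the description of $Z_i(\F)$, note that $Z_i(\F) = V(\Fitt_i(\F))$, so $p\in Z_i(\F)$ iff the stalk $\Fitt_i(\F)_p$ is a proper ideal of $\O_p$. By compatibility with localization this stalk equals $\Fitt_i(\F_p)$, which I would compute from a minimal free presentation $\O_p^m\xrightarrow{\psi}\O_p^{\mu}\to\F_p\to 0$ over the noetherian local ring $\O_p$, where $\mu = \mu(\F_p)$ and every entry of $\psi$ lies in $\mathfrak m_p$. If $i\ge\mu$ then $\mu-i\le 0$, so $\Fitt_i(\F_p) = I_{\mu-i}(\psi) = \O_p$ and $p\notin Z_i(\F)$; if $i<\mu$ then $\mu-i\ge 1$, and every $(\mu-i)$-minor of $\psi$ lies in $\mathfrak m_p$, whence $\Fitt_i(\F_p)\subseteq\mathfrak m_p$ is proper and $p\in Z_i(\F)$. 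Thus $p\in Z_i(\F)$ exactly when $\mu(\F_p) = \dim_{k(p)}\F_{(p)}$ exceeds $i$, as claimed. Apart from reproducing Fitting's lemma, every step is a direct computation, and I would keep the write-up short by citing the classical independence statement if a reference is acceptable.
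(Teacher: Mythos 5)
Your proof is correct. The paper's ``proof'' of this proposition is nothing more than a citation to Eisenbud's \emph{Commutative Algebra}, \S 20, so you have simply unpacked the standard reference: reduction to the affine case, compatibility of Fitting ideals with localization, Fitting's lemma via elementary moves on presentations (automorphisms, superfluous relations, superfluous generators, glued together by a Schanuel-type comparison), and the stalk computation from a minimal free presentation $\O_p^m\to\O_p^{\mu}\to\F_p\to 0$ with entries in $\mathfrak m_p$. Each step is the classical argument, and your own closing remark --- that one would normally just cite the independence statement --- is precisely what the paper does.
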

\begin{proof}
See \cite[\S 20]{CommAlg} for basic facts on Fitting ideals.
\end{proof}

\begin{lemma}\label{Finite1}
Let $\C$ be a set of points in $X$ and let $\F'$ be a subsheaf of a sheaf $\F$.
If $\F'$ is $w$-basic in $\F$ at all points in the following set
\[
\{ p \in X\mid \exists q\in \C \text{ such that $p$ is a generalization of $q$}\},
\]
then $\F'$ is $w$-basic at all but finitely many points in $\C$.
\end{lemma}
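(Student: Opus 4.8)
The statement is about a "shrinking" phenomenon: the $w$-basic locus of a subsheaf $\F'\subseteq\F$ is a constructible (indeed open under mild hypotheses) set, so if it contains all generizations of a point $q$, it must contain $q$ together with a neighborhood — and hence all but finitely many of a given collection $\C$. My plan is to make this precise using the Fitting-ideal description of the locus $\{\mu(\F_p)>i\}$ just established. Fix $p\in X$. The condition that $\F'$ is $w$-basic in $\F$ at $p$, i.e. $\mu((\F/\F')_p)\le\mu(\F_p)-w$, is equivalent (via the fiber sequence in the Setup section, where $\dim_{k(p)}\im\varphi_{(p)}=\mu(\F_p)-\mu((\F/\F')_p)$) to the statement that the image of $\F'_{(p)}\to\F_{(p)}$ has dimension $\ge w$; picking local generators of $\F'$, this says a certain matrix of sections (the composite of a local presentation of $\F'$ with the inclusion into a presentation of $\F$) has rank $\ge w$ at $p$. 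The locus where such a matrix has rank $<w$ is the closed subscheme cut out by its size-$w$ minors; call the non-$w$-basic locus $B_w(\F',\F)\subseteq X$. It is closed.

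Now I would argue as follows. Suppose for contradiction that infinitely many points of $\C$ lie in $B:=B_w(\F',\F)$; pick a subset $\C'\subseteq\C\cap B$ that is infinite. Since $B$ is a noetherian topological space, it has finitely many irreducible components, so some component $W$ of $B$ contains infinitely many points of $\C'$. Let $\eta$ be the generic point of $W$. Then $\eta$ is a generalization of some (in fact infinitely many) $q\in\C$ with $q\in W$: indeed any $q\in\C'\cap W$ has $\eta$ as a generization because $\eta$ is the generic point of an irreducible closed set containing $q$. Hence $\eta$ lies in the set displayed in the statement, so by hypothesis $\F'$ is $w$-basic at $\eta$, i.e. $\eta\notin B$ — contradicting $\eta\in W\subseteq B$.

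The one point that needs a little care — and which I expect to be the main (minor) obstacle — is verifying that $B_w(\F',\F)$ really is closed, i.e. that the rank-$\ge w$ condition on the comparison map between local presentations genuinely glues to a well-defined closed subscheme independent of local choices. This is exactly the content of the Fitting-ideal formalism already introduced: choosing presentations $\bigoplus\O(-b_j)\to\F'$ and $\bigoplus\O(-a_i)\to\F$ and composing, one gets a map of locally free sheaves whose $w$-th minor ideal sheaf $\I_w$ is globally defined (independent of presentations, by the cited facts in \cite[\S20]{CommAlg}), and $B_w$ is its vanishing locus $V(\I_w)$, which is closed. Alternatively, and perhaps more cleanly, one can express $B_w$ directly in terms of Fitting ideals of the quotient: $\F'$ is $w$-basic at $p$ iff $\mu((\F/\F')_p)\le\mu(\F_p)-w$ iff $p\in Z_{i-w}(\F/\F')$ whenever $p\in Z_{i-1}(\F)\setminus Z_i(\F)$, and a finite decomposition of $X$ into the locally closed strata $Z_{i-1}(\F)\setminus Z_i(\F)$ (on each of which $\mu(\F_p)$ is constant equal to $i$) reduces the claim to the closedness of each $Z_{i-w}(\F/\F')$. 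Either way the topological argument above then applies verbatim, and the proof is complete.
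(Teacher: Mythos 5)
There is a genuine gap: your central claim that the non-$w$-basic locus $B_w(\F',\F)$ is closed is false. The point is that $w$-basicness at $p$ compares two upper-semicontinuous functions, $\mu(\F_p)$ and $\mu((\F/\F')_p)$, and their difference (which equals $\dim_{k(p)}\im(\F'_{(p)}\to\F_{(p)})$) is in general neither upper nor lower semicontinuous. Your ``matrix of sections'' picture misidentifies this rank: after choosing local presentations $\E_1'\to\E_0'\to\F'$ and $\E_1\xrightarrow{\varphi_1}\E_0\to\F$ and a local lift $\psi:\E_0'\to\E_0$, the relevant quantity is not $\rank\psi_{(p)}$ but $\rank[\psi,\varphi_1]_{(p)}-\rank\varphi_{1,(p)}$, a \emph{difference} of ranks, which is not controlled by a single minor ideal. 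Concretely, take $X=\A^1=\Spec k[t]$, $\F=\O\oplus(\O/t\O)^{\oplus 2}$, and $\F'=t\O\oplus(\O/t\O)^{\oplus 2}$, so that $\F/\F'\cong\O/t\O$; then $\mu(\F_p)-\mu((\F/\F')_p)$ equals $1$ for $p\neq 0$ and $2$ at $p=0$, so the non-$2$-basic locus is $X\setminus\{0\}$ -- open, not closed. Your second (``stratification'') route correctly shows $B_w$ is \emph{constructible}, being a finite union of the sets $Z_{i-w}(\F/\F')\cap\bigl(Z_{i-1}(\F)\setminus Z_i(\F)\bigr)$ (also note the sign: $w$-basic on that stratum means $p\notin Z_{i-w}(\F/\F')$, not $\in$), but constructibility is weaker than closedness, and the topological argument does not apply ``verbatim''. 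It \emph{can} be salvaged -- a constructible set in a noetherian sober space has finitely many irreducible components, and the generic point of the closure of each lies in the set, by density and Chevalley -- but as written the proof leans on a false statement.

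The paper avoids the question of closedness of $B_w$ altogether. It fixes a non-$w$-basic $p\in\C$, sets $s=\mu((\F/\F')_p)$, and argues directly: if $p$ were not the generic point of a component of $Z_{s-1}(\F/\F')$, there would be a proper generization $q$ of $p$ inside $Z_{s-1}(\F/\F')$, and then $\mu((\F/\F')_q)=s$, while the hypothesis applied at $q$ together with the upper-semicontinuity of $\mu(\F_\bullet)$ forces $s\le\mu(\F_q)-w\le\mu(\F_p)-w$, i.e.\ $p$ is $w$-basic after all. So the offending points of $\C$ lie among the finitely many generic points of components of the finitely many $Z_i(\F/\F')$, with no global structural claim about $B_w$ needed. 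This is cleaner, closer to the original Eisenbud--Evans argument, and sidesteps precisely the subtlety that your proposal trips over.
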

\begin{proof}
We claim that if $\F'$ is not $w$-basic at $p\in \C$, then $p$ is the generic point of a component of $Z_i(\F)$ for some $i$.  
Since there are only finitely many ideals $\Fitt_i(\F/\F')$, and each $Z_i(\F)$ has only finitely many components by the noetherian property, the conclusion follows.
Let $p\in \C$ and $\mu((\F/\F')_p)=s$.
It follows that $p \in Z_{s-1}(\F/\F')$ and $p \not\in Z_s(\F/\F')$.
Suppose $p$ is not the generic point of a component, then there exists a point $q \in Z_{s-1}(\F/\F')$ that is a proper generalization of $p$.
By assumption $\mu((\F/\F')_q)\le \mu(\F_q)-w$.
Since $q$ is a generalization of $p$, it follows that $q \not\in Z_s(\F/\F')$. 
We conclude that $\mu((\F/\F')_q) = \mu((\F/\F')_p) = s$.
On the other hand $\mu(\F_q)\le \mu(\F_p)$ since $q$ is a generalization of $p$. 
It follows that $\F'$ is $w$-basic at $p$.
\end{proof}

\begin{lemma}\label{Finite2}
Let $s_1,\dots, s_u$ be twisted sections of a sheaf $\F$.
Suppose for some $t\ge 1$
\[
(s_1,\dots, s_u)\text{ is } \min(u,m+t-\dim \O_p)\text{-basic in }\F\quad \forall p\in \C_m.
\] 
Then $(s_1,\dots,s_u)$ is $\min(u,m+t+1-\dim \O_p)$-basic in $\F$ at all but finitely many $p\in \C_m$.
\end{lemma}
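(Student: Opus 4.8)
The plan is to show that the set of points $p\in\C_m$ at which $(s_1,\dots,s_u)$ fails to be $\min(u,m+t+1-\dim\O_p)$-basic in $\F$ is finite, by the same mechanism that drives \Cref{Finite1}: I would prove that every such ``bad'' point is the generic point of an irreducible component of one of the Fitting schemes $Z_i(\F/(s_1,\dots,s_u))$. Since $\F/(s_1,\dots,s_u)$ is coherent, $Z_i(\F/(s_1,\dots,s_u))$ is empty for $i\gg 0$ and each of the finitely many remaining ones has finitely many components by the Noetherian hypothesis, so this is enough.

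Write $\F'=(s_1,\dots,s_u)$. Fix a bad point $p$ and set $s=\mu((\F/\F')_p)$, so that $p\in Z_{s-1}(\F/\F')$. If $p$ were not the generic point of a component of $Z_{s-1}(\F/\F')$, there would be a proper generalization $q\in Z_{s-1}(\F/\F')$ of $p$. A proper generalization strictly drops the dimension of the local ring, so $\dim\O_q\le\dim\O_p-1\le m-1$, whence $q\in\C_m$ and the hypothesis applies at $q$. Upper semicontinuity of the minimal number of generators gives $\mu((\F/\F')_q)\le\mu((\F/\F')_p)=s$, while membership in $Z_{s-1}(\F/\F')$ gives $\mu((\F/\F')_q)\ge s$, so $\mu((\F/\F')_q)=s$; likewise $\mu(\F_q)\le\mu(\F_p)$. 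The hypothesis at $q$ then yields
\[
s=\mu((\F/\F')_q)\le\mu(\F_q)-\min(u,m+t-\dim\O_q)\le\mu(\F_p)-\min(u,m+t-\dim\O_q),
\]
and since $\dim\O_q\le\dim\O_p-1$, monotonicity of $\min(u,-)$ in its second argument gives $\min(u,m+t-\dim\O_q)\ge\min(u,m+t+1-\dim\O_p)$. Combining, $\mu((\F/\F')_p)\le\mu(\F_p)-\min(u,m+t+1-\dim\O_p)$, i.e.\ $\F'$ is $\min(u,m+t+1-\dim\O_p)$-basic at $p$, contradicting the choice of $p$. Hence $p$ is a generic point of a component of $Z_{s-1}(\F/\F')$, as claimed.

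The one delicate point is the bookkeeping with the order of basicness, which varies from point to point; this is exactly why \Cref{Finite1} cannot be quoted verbatim, its $w$ being a single fixed integer. The argument only closes because replacing $p$ by a proper generalization $q$ raises the \emph{guaranteed} order of basicness by at least one---precisely the gain we are after---thanks to the strict decrease $\dim\O_q<\dim\O_p$; I expect this matching of the ``$+1$'' coming from the dimension drop against the ``$+1$'' in $m+t+1$ to be the only real content, everything else being the semicontinuity and Noetherian finiteness already used for \Cref{Finite1}. (It is also worth observing at the outset that a bad point must satisfy $\dim\O_p\ge m+t-u+1$, since otherwise both truncations equal $u$ and there is nothing to prove; this is not strictly needed but clarifies which points can fail.)
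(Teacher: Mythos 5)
Your proof is correct, and it takes a different structural route from the paper's while exploiting exactly the same underlying mechanism. The paper reduces \Cref{Finite2} to \Cref{Finite1}: it stratifies $\C_m$ by codimension, fixes an integer $i$ with $0\le i\le m$, and notes that for points of codimension exactly $i$ the target order of basicness $\min(u,m+t+1-i)$ is a \emph{constant} $w$, while at a proper generalization $q$ (so $\dim\O_q\le i-1$) the hypothesis already supplies $\min(u,m+t-\dim\O_q)\ge\min(u,m+t+1-i)$-basicness; \Cref{Finite1} then gives finitely many bad points in each of the finitely many strata. You instead bypass the stratification and rerun the proof of \Cref{Finite1} directly with a varying $w$: you track the Fitting scheme $Z_{s-1}(\F/\F')$, pass to a proper generalization $q$ of a hypothetical non-minimal bad point, use upper semicontinuity of $\mu$ together with $q\in Z_{s-1}$, and observe that the dimension drop $\dim\O_q\le\dim\O_p-1$ turns the hypothesized $\min(u,m+t-\dim\O_q)$-basicness at $q$ into the desired $\min(u,m+t+1-\dim\O_p)$-basicness at $p$, a contradiction. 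The trade-off is minor: the paper's version factors through the already-proved constant-$w$ lemma and so is modular; yours is self-contained and makes the matching of the ``$+1$'' in $m+t+1$ against the codimension drop fully explicit, which is arguably clearer. (One small remark: for both proofs to be non-vacuous, \Cref{Finite1} should be read as requiring $w$-basicness at all \emph{proper} generalizations of points of $\C$; the paper's phrasing leaves this implicit, and your proof correctly works only with proper generalizations.)
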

\begin{proof}
Fix an integer $i$ where $0\le i\le m$, and set $\C := \C_i-\C_{i-1}$ to be the set of points in $X$ of codimension exactly $i$.
If $q$ is a generalization of a point in $\C$, then $(s_1,\dots, s_u)$ is $\min(u,m+t-\dim \O_q)$-basic in $\F$ at $q$ and therefore is $\min(u,m+t+1-i)$-basic in $\F$ at $q$. 
By \Cref{Finite1}, there are only finitely many points $p$ in $\C$ where $(s_1,\dots, s_u)$ is not $\min(u,m+t+1-\dim \O_p)$-basic in $\F$ at $p$.
\end{proof}

\begin{proof}[Proof of \Cref{BasicCodim}]
If $u\le t$, then $u\le m+t-\dim \O_p$ for all $p\in \C_m$ and the statement is trivial. 

Suppose $u >t$. 
By \Cref{Finite2}, there are only finitely many points $p_1,\dots, p_v$ in $\C_m$ where $(s_1,\dots, s_u)$ is not $\min(u,m+t+1-\dim \O_p)$-basic. 
We apply \Cref{FiniteBasic} to find forms $r_j \in H^0(\O(a_j-a_1))$ for $2\le j\le u$ such that $(s_2+r_2s_1,\dots, s_u+r_us_1)$ is $\min(u-1,m+t-\dim \O_p)$-basic at $p_1,\dots, p_v$. 
At all points $p$ in $\C_m-\{p_1,\dots, p_v\}$, the subsheaf $(s_2+r_2s_1,\dots, s_u+r_us_1)$ is also $\min(u-1,m+t-\dim\O_p)$-basic in $\F$ since $(s_1,\dots, s_u)$ is $\min(u, m+t+1-\dim \O_p)$-basic.
In summary, we found sections $s_2',\dots, s_u'$ of $\F$ in degrees $a_2,\dots, a_u$ of desired format such that $(s_2',\dots, s_u')$ is $\min(u-1,m+t-\dim\O_p)$-basic in $\F$ at all $p\in \C_m$.

We obtain the desired $t$ sections by applying the above procedure $(u-t)$-times.
\end{proof}

\subsection*{Depth Version}

We recall the definition of the grade of an ideal and extend this definition to ideal sheaves.
Let $R$ be a noetherian ring let $I$ be an ideal of $R$. 
The \emph{grade} of $I$, denoted by $\grade(I)$, is defined to be $\inf \{ i\mid \Ext^i_R(R/I,R) \ne  0\}$. 
Note that if $I = R$ then $\grade(I) = \infty$ by convention, otherwise $\grade(I)$ is a natural number.

\begin{proposition}\label[proposition]{Grade}
With notations as above, if $I\ne R$ then every maximal regular sequence of $R$ in $I$ has length $\grade(I)$. 
Moreover, we have $\grade(I) = \inf \{ \depth R_P\mid P \in V(I)\}$. 
\end{proposition}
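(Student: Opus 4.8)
This is Proposition \ref{Grade}: for a noetherian ring $R$ and a proper ideal $I$, every maximal $R$-regular sequence in $I$ has length $\grade(I) = \inf\{i \mid \Ext^i_R(R/I,R) \neq 0\}$, and moreover $\grade(I) = \inf\{\depth R_P \mid P \in V(I)\}$.

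The plan is to prove the first assertion — that all maximal regular sequences in $I$ have the same length, equal to $\inf\{i \mid \Ext^i_R(R/I,R)\neq 0\}$ — by induction on that length, and then to deduce the localization formula. First I would handle the base case: if $I$ contains no nonzerodivisor, then by prime avoidance $I$ is contained in the union of the associated primes of $R$, hence in one of them, so $\Hom_R(R/I,R)\neq 0$ and $\grade(I)=0$; the empty sequence is maximal. For the inductive step, suppose $x\in I$ is a nonzerodivisor. The key homological input is the long exact sequence obtained by applying $\Hom_R(R/I,-)$ to $0\to R\xrightarrow{x} R\to R/xR\to 0$: since $x$ kills $R/I$, this breaks into isomorphisms $\Ext^i_R(R/I,R/xR)\cong \Ext^{i+1}_R(R/I,R)$ for all $i\ge 0$ — here one uses that $\Ext^i_R(R/I,R)\xrightarrow{x}\Ext^i_R(R/I,R)$ is the zero map. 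Therefore $\grade_R(I) = \grade_{R/xR}(I/xR) + 1$. By induction, every maximal $R/xR$-regular sequence in $I/xR$ has length $\grade_{R/xR}(I/xR)$; lifting such a sequence and prepending $x$ gives a maximal $R$-regular sequence in $I$, and conversely any maximal $R$-regular sequence in $I$ starting with a nonzerodivisor $x$ descends to one in $I/xR$. This forces the common length to be $\grade_R(I)$, provided $I/xR$ is still a proper ideal of $R/xR$ — which holds since $x\in I$ and $I$ is proper, so $\grade(I)$ remains finite at each stage.

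Granting the first assertion, the localization formula follows by a standard comparison. On one hand, for any $P\in V(I)$, a maximal $R$-regular sequence in $I$ localizes to an $R_P$-regular sequence in $IR_P\subseteq PR_P$, so $\grade(I)\le \depth R_P$; taking the infimum over $P$ gives $\grade(I)\le \inf_{P\in V(I)}\depth R_P$. For the reverse inequality I would argue that $\depth R_P$ is itself computed by $\Ext$: one has $\depth R_P = \inf\{i \mid \Ext^i_{R_P}(k(P),R_P)\neq 0\}$, and more usefully $\Ext^i_R(R/I,R)_P \cong \Ext^i_{R_P}(R_P/IR_P,R_P)$, so if $i_0:=\grade(I)$ then $\Ext^{i_0}_R(R/I,R)\neq 0$ has a nonempty support contained in $V(I)$; picking an associated prime $P$ of this module (or any $P$ in its support that also lies in the support of $\Ext^{i_0}_R(R/I,R)$ after localizing), one gets $\Ext^{i_0}_{R_P}(R_P/IR_P,R_P)\neq 0$ while all lower $\Ext$'s vanish there, hence $\grade_{R_P}(IR_P)=i_0$ and therefore $\depth R_P \le i_0$ by the comparison $\grade_{R_P}(IR_P)\le \depth R_P$ cannot be used for an upper bound directly — instead I use that over the local ring $R_P$ the grade of the proper ideal $IR_P$ equals $i_0$, and combine with the elementary fact $\depth R_P = \grade_{R_P}(PR_P)\ge \grade_{R_P}(IR_P)$...

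The main obstacle, and the point I would be most careful about, is the reverse inequality $\inf_{P\in V(I)}\depth R_P \le \grade(I)$: the clean way to get it is to show $\Ext^{\grade(I)}_R(R/I,R)\neq 0$ has an associated prime $P\in V(I)$ with $\depth R_P = \grade(I)$. Concretely, if $x_1,\dots,x_g$ is a maximal regular sequence in $I$ with $g=\grade(I)$, then $\Ext^g_R(R/I,R)\supseteq \Hom_R(R/I, R/(x_1,\dots,x_g))\neq 0$, and $\Hom_R(R/I,R/(x_1,\dots,x_g))\neq 0$ means $I$ consists of zerodivisors on $R/(x_1,\dots,x_g)$, so by prime avoidance $I\subseteq P$ for some $P\in\Ass(R/(x_1,\dots,x_g))$; then $x_1,\dots,x_g\in PR_P$ is a regular sequence on $R_P$ and $IR_P\subseteq PR_P$ consists of zerodivisors modulo it, so it is a maximal $R_P$-regular sequence inside $PR_P$, giving $\depth R_P = g$. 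This $P$ witnesses the infimum, completing the proof; I expect the bookkeeping around which ideal stays proper at each inductive stage, and the prime-avoidance step, to be where the care is needed rather than any deep difficulty.
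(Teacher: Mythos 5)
The paper does not actually prove this proposition: the in-text ``proof'' is the single line ``See \cite[\S 1.2]{BH} for basic facts on the grade of an ideal.'' So there is no argument in the paper to compare yours against --- you have supplied what the paper delegates to Bruns--Herzog, and your overall strategy (Rees's characterization of grade via $\Ext$, inductive step by dividing by a nonzerodivisor, then localization and an associated prime of $R/(x_1,\dots,x_g)$ to witness the infimum) is the standard textbook route and the right one.

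That said, there is one real gap in the inductive step of the first assertion. Applying $\Hom_R(R/I,-)$ to $0\to R\xrightarrow{x}R\to R/xR\to 0$ and using that $x$ annihilates $R/I$ gives, for each $i$, a \emph{short exact sequence} $0\to\Ext^i_R(R/I,R)\to\Ext^i_R(R/I,R/xR)\to\Ext^{i+1}_R(R/I,R)\to 0$, not the isomorphisms you assert; you only get an isomorphism when $\Ext^i_R(R/I,R)=0$, i.e. in the range $i<\grade(I)$. More to the point, the term in the middle is $\Ext^i_{R}(R/I,R/xR)$, computed over $R$ --- but what you need for the induction to close is $\Ext^i_{R/xR}(R/I,R/xR)$, computed over $R/xR$, since your inductive hypothesis is a statement about the ring $R/xR$. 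These two $\Ext$ groups do not agree in general (already $\Ext^1_R(k,k)\ne 0$ but $\Ext^1_k(k,k)=0$ when $R=k[x]$ and $x$ is the variable). The bridge is Rees's lemma (dimension shift under a nonzerodivisor that annihilates the source): $\Ext^{n}_R(R/I,R)\cong\Ext^{n-1}_{R/xR}(R/I,R/xR)$ for $n\ge 1$. Once you invoke that, $\grade_R(I)=\grade_{R/xR}(I/xR)+1$ is immediate and the induction goes through. So the argument is correct in spirit but you should either cite Rees's lemma or restate the induction so it stays over $R$ (e.g. induct on a statement about $\grade(I,M):=\inf\{i:\Ext^i_R(R/I,M)\ne 0\}$ for $R$-modules $M$, which the long exact sequence handles cleanly without change of rings).

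Two cosmetic points. The second-to-last paragraph on the reverse inequality $\inf_P\depth R_P\le\grade(I)$ is a visible dead end that you correctly abandon; it should be cut, since the ``Concretely'' paragraph that follows is the actual argument, and it is correct. And ``$\Ext^g_R(R/I,R)\supseteq\Hom_R(R/I,R/(x_1,\dots,x_g))$'' should be an isomorphism (by the same dimension shift), not an inclusion. With those repairs the proof is complete and matches the standard treatment.
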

\begin{proof}
See \cite[\S 1.2]{BH} for basic facts on the grade of an ideal.
\end{proof}

The latter property allows us to define the grade of an ideal sheaf.
\begin{definition}
If $X$ is a noetherian scheme and $\I$ is an ideal sheaf, then we define 
\[
\grade(\I) := \inf\{\depth \O_p\mid p\in V(\I)\}.
\]
\end{definition}

\begin{lemma}\label[lemma]{FiniteDepth1}
If $\I$ is an ideal sheaf of $X$, then there are only finitely many $p\in V(\I)$ such that $\depth \O_p = \grade(\I)$. 
\end{lemma}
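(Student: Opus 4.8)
The plan is to reduce the finiteness statement to the finiteness of a suitable set of associated primes, using the characterization $\grade(\I) = \inf\{\depth\O_p \mid p \in V(\I)\}$ from \Cref{Grade}. Set $g := \grade(\I)$; if $\I = \O$ then $V(\I) = \emptyset$ and there is nothing to prove, so assume $g$ is a natural number. The key observation is that a point $p \in V(\I)$ with $\depth\O_p = g$ realizes the infimum, and such a minimal-depth point cannot have a proper generalization in $V(\I)$ of strictly smaller depth (there is none), yet it must be, in an appropriate sense, ``extremal'' — I want to show it is the generic point of a component of some naturally attached closed subscheme, of which there are only finitely many by the noetherian hypothesis.

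First I would pass to an affine chart: since $X$ is noetherian it is covered by finitely many affine opens $\Spec R_\alpha$, so it suffices to prove that for a noetherian ring $R$ and an ideal $I \subsetneq R$, the set of primes $P \in V(I)$ with $\depth R_P = \grade(I)$ is finite. Next I would use the standard fact (e.g.\ from \cite[\S1.2]{BH}) that $\grade(I) = g$ means there is a regular sequence of length $g$ inside $I$, say $x_1,\dots,x_g$, which is then a maximal $R$-regular sequence in $I$ by \Cref{Grade}. For a prime $P \in V(I)$ with $\depth R_P = g$: the images $x_1,\dots,x_g$ form a regular sequence in $R_P$ of length equal to $\depth R_P$, hence a maximal regular sequence, so $P$ is an associated prime of $R_P/(x_1,\dots,x_g)R_P$, equivalently $P/(x_1,\dots,x_g) \in \Ass\big(R/(x_1,\dots,x_g)\big)$. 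Since $R$ is noetherian, $R/(x_1,\dots,x_g)$ has only finitely many associated primes, so there are only finitely many such $P$.

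The main obstacle is the choice of the regular sequence: a priori the element $x_i$ realizing the grade depends on nothing canonical, but the point is that \emph{any} maximal regular sequence in $I$ works, and its existence with length exactly $g$ is precisely the content of \Cref{Grade}. One subtlety to handle carefully: localizing a regular sequence $x_1,\dots,x_g \in I \subseteq P$ at $P$ keeps it a regular sequence in $R_P$ (since regularity localizes), and it remains inside $P R_P \ne R_P$; combined with $\depth R_P = g$ this forces it to be maximal in $R_P$, which is the step that yields $P \in \Ass$ — I would cite \cite[\S1.2]{BH} for the equivalence ``maximal regular sequence in the maximal ideal of a local ring has length equal to the depth, and its quotient has depth $0$, i.e.\ the maximal ideal is associated.'' Finally I would note that only finitely many primes of $R$ lie over each of the finitely many associated primes after reintroducing the covering (each chart contributes finitely many, and the charts are finite in number), completing the argument.
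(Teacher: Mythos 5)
Your proposal is correct and follows essentially the same route as the paper: reduce to an affine chart $\Spec R$, take a regular sequence $x_1,\dots,x_g$ in $I$, show that any minimal-depth prime $P\in V(I)$ becomes an associated prime of $R/(x_1,\dots,x_g)$, and conclude by finiteness of associated primes. One small point you glide over: the infimum $g = \grade(\I)$ is taken over all of $V(\I)$, so on a given chart $U_\alpha$ one only has $g \le \grade(I_\alpha)$; the paper therefore works with a regular sequence of length exactly $g$ (which exists since $g\le\grade(I_\alpha)$), while you take a maximal one of length $\grade(I_\alpha)$ — this is fine, since any $P\in V(I_\alpha)$ with $\depth (R_\alpha)_P = g$ forces $g=\grade(I_\alpha)$, but the observation deserves to be made explicit, as the paper does.
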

\begin{proof}
Suppose $V(\I)$ is not empty and set $\depth \I = d$.
We may cover $X$ by finitely many affine open subschemes $U_i = \Spec R_i$ and prove that there are only finitely many points $p$ in each $V(\I)\cap U_i$ such that $\depth \O_p = d$. 
Let $I_i$ be the ideal of $R_i$ corresponding to the restriction of $\I$ on $U_i$. 
If $I_i = R_i$ then there is no point $p$ in $V(\I) \cap U_i$, and the statement is vacuously true. 
We may assume $I_i \subsetneq R_i$. 
In this case
\[
d = \inf \{\depth \O_p\mid p\in V(\I)\} \le \inf \{ \depth \O_p \mid p \in V(\I)\cap U_i\} = \grade(I_i).
\]
By \Cref{Grade}, there is a regular sequence $x_1,\dots, x_d$ of $R_i$ in $I_i$. 
If $P$ is a prime ideal of $R_i$ containing $I_i$, then $x_1,\dots, x_d$ is a regular sequence contained in $P$. 
If $\depth (R_i)_P = d$, then $\depth (R_i/(x_1,\dots, x_d))_P = \depth (R_i)_P/(x_1,\dots, x_d)_P = 0$.
It follows that $P$ is an associated prime of $R_i/(x_1,\dots, x_d)$. 
Since there are only finitely many associated primes of $R_i/(x_1,\dots, x_d)$, the conclusion follows.
\end{proof}

\begin{lemma}[cf. \Cref{Finite2}]\label[lemma]{FiniteDepth2}
Let $\F$ be a sheaf where 
\begin{align*}
\mu(\F,q) \le \mu(\F,p)\quad \forall p,q\in X \text{ such that }\depth \O_q < \depth \O_p \le m\tag{$\dagger$}.
\end{align*}
Let $s_1,\dots, s_u$ be twisted sections of $\F$. 
Suppose for some $t\ge 1$, 
\[
(s_1,\dots, s_u)\text{ is }\min(u, m+t-\depth \O_p)\text{-basic in }\F\quad \forall p\in \D_m.
\] 
Then $(s_1,\dots, s_u)$ is $\min(u,m+t+1-\depth \O_p)$-basic in $\F$ at all but finitely many $p\in \D_m$.
\end{lemma}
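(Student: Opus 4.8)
\emph{Proof strategy.} The plan is to follow the proof of \Cref{Finite2} as closely as possible, replacing its use of generalizations (which is special to the codimension filtration) by an appeal to \Cref{FiniteDepth1}, with condition $(\dagger)$ playing the role that upper semicontinuity of $\mu(\F_p)$ along generalizations played there. Write $\F':=(s_1,\dots,s_u)$, put $\G:=\F/\F'$, and for $p\in X$ set $\rho(p):=\mu(\F_p)-\mu(\G_p)$; by the computation in the Setup section this is the $k(p)$-dimension of $\im\varphi_{(p)}$ for the map $\varphi\colon\bigoplus_i\O(-a_i)\to\F$ defining $\F'$, and $\F'$ is $w$-basic in $\F$ at $p$ if and only if $\rho(p)\ge w$. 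First I would record the purely numerical observation that if $p\in\D_m$ has $\depth\O_p=j$ and $\F'$ is \emph{not} $\min(u,m+t+1-j)$-basic at $p$, then together with the hypothesis $\rho(p)\ge\min(u,m+t-j)$ this forces $m+t-j\le u-1$ and $\rho(p)=m+t-j$ exactly; indeed the set of integers $w$ with $\min(u,m+t-j)\le w<\min(u,m+t+1-j)$ equals $\{m+t-j\}$ if $m+t-j\le u-1$ and is empty otherwise. Consequently a ``bad'' point $p$ is pinned down, up to finitely many choices, by $j=\depth\O_p\in\{0,\dots,m\}$ and $a:=\mu(\F_p)$, and the latter ranges over the finitely many values $\le\rank\E_0$ for a locally free presentation $\E_1\to\E_0\to\F\to0$; moreover $\mu(\G_p)=a-(m+t-j)=:g$ is then determined.

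It therefore suffices, for each fixed pair $(j,a)$, to show that the set $B$ of bad points with $\depth\O_p=j$ and $\mu(\F_p)=a$ is finite. If $B=\emptyset$ there is nothing to prove, so fix $p_0\in B$. Since $\mu(\F_p)=a$ and $\mu(\G_p)=g$ for every $p\in B$, we have $B\subseteq T:=Z_{a-1}(\F)\cap Z_{g-1}(\G)$, the closed subscheme of $X$ with ideal sheaf $\I_T:=\Fitt_{a-1}(\F)+\Fitt_{g-1}(\G)$, and every point of $B$ has depth $j$ in $X$. I would then establish the key claim that $\grade(\I_T)=j$. The inequality $\grade(\I_T)\le j$ is clear since $p_0\in T$ has depth $j$. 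For $\grade(\I_T)\ge j$: suppose some $q\in T$ had $\depth\O_q<j\le m$, so $q\in\D_m$; applying condition $(\dagger)$ to the pair $q,p_0$ gives $\mu(\F_q)\le\mu(\F_{p_0})=a$, while $q\in Z_{a-1}(\F)$ gives $\mu(\F_q)\ge a$, hence $\mu(\F_q)=a$; then $q\in Z_{g-1}(\G)$ gives $\mu(\G_q)\ge g$, so $\rho(q)=\mu(\F_q)-\mu(\G_q)\le a-g=m+t-j$; but the Lemma's hypothesis at $q$ gives $\rho(q)\ge\min(u,m+t-\depth\O_q)\ge\min(u,m+t-j+1)=m+t-j+1$, using $\depth\O_q\le j-1$ and $m+t-j+1\le u$, a contradiction. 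Hence $\grade(\I_T)=j$, and \Cref{FiniteDepth1} bounds the number of points of $T$ of depth $j$, hence $\#B$. Taking the union over the finitely many pairs $(j,a)$ finishes the proof.

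The main obstacle is precisely the inequality $\grade(\I_T)\ge j$, and it is the reason condition $(\dagger)$ is imposed. The subscheme $T$ is cut out by Fitting ideals and \emph{a priori} may carry components — even embedded or associated points — of depth far smaller than $j$, in which case \Cref{FiniteDepth1} would only control points of that smaller depth and say nothing about the bad points of depth $j$. Condition $(\dagger)$, combined with the lower bound $\mu(\F_q)\ge a$ coming from the membership $q\in Z_{a-1}(\F)$, is exactly what forces $\mu(\F_q)=a$ at every would-be low-depth point of $T$ and so produces the contradiction; the rest of the argument is routine bookkeeping with minima, parallel to \Cref{Finite2}.
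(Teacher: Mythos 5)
Your proof is correct and follows essentially the same strategy as the paper's: stratify the bad locus by the finitely many possible pairs $(\depth \O_p,\mu(\F_p))$, use condition $(\dagger)$ together with the basicness hypothesis to bound the grade of an appropriate Fitting scheme from below by the relevant depth, and invoke \Cref{FiniteDepth1}. The only cosmetic difference is that the paper works directly with the single Fitting locus $Z_b(\F/(s_1,\dots,s_u))$ with $b=\mu(\F_p)-\min(u,m+t+1-\depth\O_p)$, whereas you intersect with $Z_{a-1}(\F)$ as well; your extra factor is harmless but unnecessary, since the paper's grade estimate already goes through using only $(\dagger)$ and the hypothesis at $q$ without first pinning down $\mu(\F_q)=a$.
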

\begin{proof}
We claim that for fixed values of $a := \depth \O_p$ and $b := \mu(\F,p)-\min(u,m+t+2-a)$, there are only finitely many $p\in \D_m$ where $\depth \O_p = a$ and
\[
\mu(\F/(s_1,\dots, s_u),p) > b.
\]
Since there are only finitely many possibilities of $a$ and $b$, the lemma follows from this claim.

If $q\in X$ is any point where $\depth \O_q < \depth \O_p = a \le m$, then 
\begin{align*}
\mu(\F/(s_1,\dots, s_u),q) & \le \mu(\F,q) - \min(u,m+t+1-\depth \O_q)\\
& \le \mu(\F,p) -\min(u,m+t+1-\depth \O_p) \\
&= b
\end{align*}
by our assumption on $\F$.
We conclude that $q\not\in Z_b(\F/(s_1,\dots, s_u))$ and therefore 
\[
\grade \Fitt_b(\F/(s_1,\dots, s_u)) \ge a.
\] 
By \Cref{FiniteDepth1}, there are only finitely many $p\in Z_b(\F/(s_1,\dots,s_u))$ where $\depth \O_p = a$. 
\end{proof}

\begin{proof}[Proof of \Cref{BasicDepth}]
The proof is the same as that of \Cref{BasicCodim}, with $\D_m$ in place of $\C_m$ and with an application of \Cref{FiniteDepth2} in place of \Cref{Finite2}.
\end{proof}


\begin{thebibliography}{9}


\bibitem{Bass}
Bass, Hyman.
\emph{$K$-theory and stable algebra}.
Inst. Hautes \'Etudes Sci. Publ. Math. No. 22 (1964), 5–60.

\bibitem{Bruns}
Bruns, Winfried.
\emph{"Jede`` endliche freie Auflösung ist freie Auflösung eines von drei Elementen erzeugten Ideals}. (German)
J. Algebra 39 (1976), no. 2, 429–439.

\bibitem{BH}
Bruns, Winfried; Herzog, J\"urgen.
\emph{Cohen-Macaulay rings}. 
Cambridge Studies in Advanced Mathematics, 39. 
Cambridge University Press, Cambridge, 1993. 
{\rm xii}+403 pp. ISBN: 0-521-41068-1

\bibitem{Catanese}
Catanese, Fabrizio.
\emph{Footnotes to a theorem of I. Reider}. Algebraic geometry (L'Aquila, 1988), 67–74,
Lecture Notes in Math., 1417, Springer, Berlin, 1990.

\bibitem{CommAlg}
Eisenbud, David.
\emph{Commutative algebra. With a view toward algebraic geometry.}
Graduate Texts in Mathematics, 150. Springer-Verlag, New York, 1995. xvi+785 pp. 

\bibitem{EE1}
Eisenbud, David; Evans, E. Graham, Jr.
\emph{Basic elements: Theorems from algebraic K-theory}.
Bull. Amer. Math. Soc. 78 (1972), 546–549.

\bibitem{EE}
Eisenbud, David; Evans, E. Graham, Jr.
\emph{Generating modules efficiently: theorems from algebraic K-theory}.
J. Algebra 27 (1973), 278–305.

\bibitem{Forster}
Forster, Otto.
\emph{\"Uber die Anzahl der Erzeugenden eines Ideals in einem Noetherschen Ring}. (German)
Math. Z. 84 (1964), 80–87.

\bibitem{GH}
Griffiths, Phillip; Harris, Joseph.
\emph{Residues and zero-cycles on algebraic varieties}.
Ann. of Math. (2) 108 (1978), no. 3, 461–505.


\bibitem{Hartshorne}
Hartshorne, Robin.
\emph{Stable reflexive sheaves}.
Math. Ann. 254 (1980), no. 2, 121–176.

\bibitem{Rao}
Lazarsfeld, Robert; Rao, Prabhakar.
\emph{Linkage of general curves of large degree}. Algebraic geometry - open problems (Ravello, 1982), 267–289,
Lecture Notes in Math., 997, Springer, Berlin, 1983.

\bibitem{Mumford}
Mumford, David.
\emph{Lectures on curves on an algebraic surface}.
With a section by G. M. Bergman. Annals of Mathematics Studies, No. 59 Princeton University Press, Princeton, N.J. 1966 xi+200 pp.


\bibitem{PS}
Peskine, Christian; Szpiro, Lucien.
\emph{Liaison des vari\'{e}t\'{e}s alg\'{e}briques. I}. (French)
Invent. Math. 26 (1974), 271–302.

\bibitem{Swan}
Swan, Richard G.
\emph{The number of generators of a module}.
Math. Z. 102 (1967), 318–322.

\bibitem{Zhang}
Zhang, Mengyuan.
\emph{Biliaison of sheaves}. Preprint
\href{https://arxiv.org/abs/2004.02280}{arXiv:2004.02280}


\end{thebibliography}
\end{document}